\documentclass [twoside,reqno,12pt] {amsart}
\usepackage[left=1in,right=1in,top=1in,bottom=1in]{geometry}


\usepackage[hidelinks]{hyperref}
\usepackage{amsfonts}
\usepackage{amssymb}
\usepackage{color}
\usepackage{graphics}
\usepackage{comment}

\newtheorem{thm}{Theorem}[section]

\newtheorem{lem}[thm]{Lemma}
\newtheorem{prop}[thm]{Proposition}

\newtheorem{defn}[thm]{Definition}
\newtheorem{rem}[thm]{Remark}

\theoremstyle{definition}

\numberwithin{equation}{section}

\renewcommand{\Re}{\mathrm{Re}}
\renewcommand{\Im}{\mathrm{Im}}

\newcommand{\C}{\mathbb{C}}

\newcommand{\n}[1]{\langle #1 \rangle}
\newcommand{\N}{\mathbb{N}}

\newcommand{\R}{\mathbb{R}}

\newcommand{\cS}{\mathcal{S}}

\newcommand{\pM}{{\partial M}}
\newcommand{\tr}{\hbox{tr}\,}
\newcommand{\scl}{\mathrm{scl}}

\def\hat{\widehat}
\def\tilde{\widetilde}
\def \bfo {\begin {eqnarray*} }
\def \efo {\end {eqnarray*} }
\def \ba {\begin {eqnarray*} }
\def \ea {\end {eqnarray*} }
\def \beq {\begin {eqnarray}}
\def \eeq {\end {eqnarray}}
\def \supp {\hbox{supp }}
\def \diam {\hbox{diam }}

\def \p {\partial}

\usepackage{empheq}

\newcommand{\LV}{\left|}
\newcommand{\RV}{\right|}
\newcommand{\LP}{\left(}
\newcommand{\RP}{\right)}

\newcommand{\LA}{\left<}
\newcommand{\RA}{\right>}
\newcommand{\confm}{c^{-\frac{n-2}{2}}}

\newcommand{\openbdy}{(0, T)\times \p M}

\newcommand{\conf}{c^{-\frac{n-2}{4}}}

\def\hat{\widehat}
\def\tilde{\widetilde}
\def \bfo {\begin {eqnarray*} }
\def \efo {\end {eqnarray*} }
\def \ba {\begin {eqnarray*} }
\def \ea {\end {eqnarray*} }
\def \beq {\begin {eqnarray}}
\def \eeq {\end {eqnarray}}
\def \supp {\hbox{supp }}
\def \diam {\hbox{diam }}

\def \p {\partial}


\usepackage{color,comment}
\newcommand{\cF}{\mathcal{F}}
\newcommand{\cO}{\mathcal{O}}


\begin{document}
\title[inverse problem for hyperbolic equation on manifolds]
{Recovery of a time-dependent potential in hyperbolic equations on conformally transversally anisotropic manifolds}

\author[Liu]{Boya Liu}
\address{B. Liu, Department of Mathematics\\
North Carolina State University, Raleigh\\ 
NC 27695, USA\\
and
Department of Mathamatics
\\
North Dakota State University\\
Fargo\\
ND 58102, USA}
\email{boya.liu@ndsu.edu}

\author[Saksala]{Teemu Saksala}
\address{T. Saksala, Department of Mathematics\\
North Carolina State University, Raleigh\\ 
NC 27695, USA}
\email{tssaksal@ncsu.edu}

\author[Yan]{Lili Yan}
\address{L. Yan, School of Mathematics, University of Minnesota, Minneapolis, MN 55455, USA}
\email{lyan@umn.edu}

\begin{abstract}
We study an inverse problem of determining a time-dependent potential appearing in the wave equation on conformally transversally anisotropic manifolds of dimension three or higher. These are compact Riemannian manifolds with boundary that are conformally embedded in a product of the real line and a transversal manifold. Under the assumption of the attenuated geodesic ray transform being injective on the transversal manifold, we prove the unique determination of time-dependent potentials from the knowledge of a certain partial Cauchy data set.

\end{abstract}
\maketitle 

\section{Introduction and Statement of Results}
\label{sec:intro}
Let $(M, g)$ be a smooth, compact, oriented Riemannian manifold of dimension $n \ge 3$ with smooth boundary $\p M$. Throughout this paper we denote $Q =(0,T)\times M^{int}$ with $0<T<\infty$, $\overline{Q}$ the closure of $Q$, and $\Sigma = \openbdy$ the lateral boundary of $Q$. We introduce the Laplace-Beltrami operator $\Delta_g$ of the metric $g$, 
and for a given smooth and strictly positive function $c(x)$ on $M$, we consider the wave operator
\begin{equation}
\label{eq:wave_conformal}
\Box_{c,g}=c(x)^{-1}\p_t^2-\Delta_g
\end{equation}
with time-independent coefficients. 

In this paper we study an inverse problem for the linear hyperbolic partial differential operator
\begin{equation}
\label{eq:potential_conformal}
\mathcal{L}_{c,g,q}=\Box_{c,g}+q(t, x), \quad (t,x)\in Q,
\end{equation}
with a time-dependent coefficient $q\in C(\overline{Q})$, called the \textit{potential}.

We shall make two geometric assumptions, of which the first one is the following:
\begin{defn}
\label{def:CTA_manifolds}
A Riemannian manifold $(M,g)$ of dimension $n\ge 3$ with boundary $\p M$ is called conformally transversally anisotropic (CTA) if $M$ is a compact subset of a manifold $\R\times M_0^{\text{int}}$ and $g= c(e \oplus g_0)$. Here $(\R,e)$ is the real line, $(M_0,g_0)$ is a smooth compact $(n-1)$-dimensional Riemannian manifold with smooth boundary, called the transversal manifold, and $c\in C^\infty(\R\times M_0)$ is a strictly positive function. 
\end{defn}

\begin{rem}
The conformal factor $c$ in Definition \ref{def:CTA_manifolds} is the same as the coefficient $c(x)$ appearing in the wave operator \eqref{eq:wave_conformal}.
\end{rem}

Examples of CTA manifolds include precompact smooth proper subsets of Euclidean, spherical, and hyperbolic spaces, see \cite{Ferreira_Kur_Las_Salo} for some more examples of CTA manifolds. The global product structure of $M$ allows us to write every point $x\in M$ as $x=(x_1,x')$, where $x_1 \in \R$ and $x'\in M_0$. In particular, the projection $\varphi(x) = x_1$ is a \textit{limiting Carleman weight}. The existence of a limiting Carleman weight implies that a conformal multiple of the metric $g$ admits a parallel unit vector field, and the converse holds for simply connected manifolds, see \cite[Theorem 1.2]{Ferreira_Kenig_Salo_Uhlmann}. The latter condition holds if and only if the manifold $(M,g)$ is locally isometric to the product of an interval and some $(n-1)$-dimensional Riemannian manifold $(M_0,g_0)$.

In addition to the product structure of the ambient space $\R \times M_0$ of the manifold $(M,g)$, we need to also assume the injectivity of certain geodesic ray transforms on the transversal manifold $(M_0,g_0)$. This type of assumption has been implemented to solve many important inverse problems on CTA manifolds, see for instance \cite{Cekic,Ferreira_Kur_Las_Salo,Krupchyk_Uhlmann_magschr,Liu_Saksala_Yan,Yan} and the references therein.

Let us now recall some definitions related to geodesic ray transforms on Riemannian manifolds with boundary.
Geodesics of $(M_0,g_0)$ can be parametrized (non-uniquely) by points on the unit sphere bundle $SM_0 = \{(x, \xi) \in TM_0: |\xi|=1\}$. We denote
\[
\p_\pm SM_0 = \{(x, \xi) \in SM_0: x \in \textcolor{red}{\p}M_0, \: \pm \langle \xi, \nu(x) \rangle > 0\}
\]
the incoming (--) and outgoing (+) boundaries of $SM_0$ corresponding to the geodesics touching the boundary. Here $\langle \cdot, \cdot \rangle$ is the Riemannian inner product of $(M_0,g_0)$, and $\nu$ is the outward unit normal vector to $\p M_0$ with respect to the metric $g_0$.

For any $(x, \xi) \in \p_-SM_0$, we let $\gamma = \gamma_{x, \xi}$ be a geodesic of $M_0$ with initial conditions $(\gamma(0), \dot{\gamma} (0)) = (x, \xi)$. Then $\tau_{\mathrm{exit}}(x, \xi)>0$ stands for the first time when $\gamma$ meets $\p M_0$, with the convention that $\tau_{\mathrm{exit}}(x, \xi) = +\infty$ if $\gamma(\tau)\in M_0^{\text{int}}$ for all $\tau>0$. We say that a unit speed geodesic segment $\gamma: [0, \tau_{\mathrm{exit}}(x, \xi)] \to M_0$, $0<\tau_{\mathrm{exit}}(x, \xi)<\infty$, is \textit{non-tangential} if $\dot{\gamma}(0)$ and $\dot{\gamma}(\tau_{\mathrm{exit}}(x, \xi))$ are non-tangential vectors to $\p M_0$, and $\gamma(\tau)\in M_0^\mathrm{int}$ for all $0<\tau<\tau_{\mathrm{exit}}(x, \xi)$.

Given a continuous function $\alpha$ on $M_0$, the attenuated geodesic ray transform of a function $f\colon M_0 \to \R$ is given by
\begin{equation}
\label{eq:geo_trans_def}
I^\alpha(f)(x, \xi) = \int_0^{\tau_{\mathrm{exit}}(x, \xi)}  \exp\bigg[\int_{0}^{t}\alpha(\gamma_{x, \xi}(s))ds\bigg] f(\gamma_{x, \xi}(t))dt, \quad (x, \xi) \in \p_-SM_0 \setminus \Gamma_-, 
\end{equation}
where $\Gamma_- = \{(x, \xi) \in \p_-SM_0: \tau_{\mathrm{exit}}(x, \xi)= +\infty\}$. The attenuated geodesic ray transform is the mathematical basis for the medical imaging method SPECT (single-photon emission computed tomography), which
is commonly used to diagnose and monitor heart problems as well as bone and brain disorders.
Inversion of an attenuated geodesic ray transform is a crucial part of solving the Calder\'on problem on CTA manifolds \cite{Ferreira_Kenig_Salo_Uhlmann}. 

The second geometric assumption we make in this paper is as follows.

\textbf{Assumption 1.} 
\label{asu:inj}
There exists $\varepsilon>0$ such that for any smooth attenuation  $\alpha$ on $M_0$ with $\|\alpha\|_{L^\infty(M_0)}<\varepsilon$, the respective attenuated geodesic ray transform $I^\alpha$ on the transversal manifold $(M_0, g_0)$ is injective over continuous functions $f$ in the sense that if $I^\alpha(f)(x, \xi)=0$ for all $(x, \xi) \in \p_-SM_0 \setminus \Gamma_-$ such that $\gamma_{x, \xi}$ is a non-tangential geodesic, then $f=0$ in $M_0$.  

Injectivity of the attenuated geodesic ray transform on \textit{simple} manifolds for small attenuations $\alpha$ was established in \cite[Theorem 7.1]{Ferreira_Kenig_Salo_Uhlmann}. A compact, simply connected Riemannian manifold with smooth boundary is said to be simple if its boundary is strictly convex, and no geodesic has conjugate points. When $\alpha=0$, injectivity of the geodesic ray transform on simple manifolds is well-known, see  \cite{Mukhometov,Sharafutdinov}.

The attenuated geodesic ray transform $I^\alpha$ is also known to be injective when some other geometric conditions are imposed.  
For instance, it was established in \cite[Theorem 29]{deHoop_Ilmavirta} that $I^\alpha$ is injective on spherically symmetric manifolds satisfying the Herglotz condition when the attenuation $\alpha$ is radially symmetric and Lipschitz continuous. The attenuation is a constant in this paper. The Herglotz condition is a special case of a manifold satisfying a convex foliation condition, and in \cite{paternain2019geodesic} the injectivity of $I^\alpha$ is verified on this type of manifolds of dimension $n\ge 3$. Some examples of manifolds satisfying the global foliation condition are the punctured Euclidean space $\R^n \setminus \{0\}$ and the torus $\mathbb{T}^n$. We refer readers to \cite[Section 2]{paternain2019geodesic} for more examples. The convex foliation condition does not forbid the existence of conjugate points in general. 

Finally, we discuss the ``measurements" considered in this paper before presenting our main result. We observe that the limiting Carleman weight $\varphi(x)$ gives us a canonical way to define the front and back faces of $\p M$ and $\p Q$. Let $\nu$ be the outward unit normal vector to $\pM$ with respect to the metric $g$. We denote $\p M_\pm = \{x \in \p M: \pm \p_\nu\varphi(x) \ge 0\}$ and $\Sigma_\pm=(0, T)\times \p M_\pm^{\text{int}}$. 
Then we define  $U=(0, T)\times U'$ and $V=(0,T)\times V'$, where $U',V'\subset \p M$ are open neighborhoods of $\p M_+$, $\p M_-$, respectively. 

The goal of this paper is to prove the unique determination of the time-dependent potential $q(t,x)$, which appears in \eqref{eq:potential_conformal}, from the following set of partial Cauchy data
\begin{equation}
\label{eq:Cauchy_data}
\mathcal{C}_{g,q} = \{(u|_U, u|_{t=T}, \p_tu|_{t=0},\p_\nu u|_V): u\in L^2(Q), \: \mathcal{L}_{c,g,q}u=0, u|_{t=0} = 0, \: \supp u|_\Sigma \subset U \}.
\end{equation}


The wellposedness of this set has been established in \cite[Section 3]{Kian_Oksanen}. From a physical perspective, as introduced in \cite{Kian_partial_data}, the inverse problem considered in this paper can be interpreted as the determination of physical properties such as the time-evolving density of an inhomogeneous medium by probing it with disturbances generated on some parts of the boundary and at initial time, and by measuring the response on some parts of the boundary and at the end of the experiment. 





We highlight that in $\mathcal{C}_{g,q}$ the Dirichlet value is measured and supported only on roughly half of the lateral boundary $U$, and the Neumann data is measured on approximately the other half of the lateral boundary $V$. Measurements are also made at the initial time $t=0$ and the end time $t=T$. 
It follows from the domain of dependence arguments given in \cite[Subsection 1.1]{Kian_partial_data} that we can only hope to recover general time-dependent coefficients in the optimal set
\[
\mathcal{D}:=\{(t, x)\in Q:  \mathrm{dist}(x, \p M)<t<T- \mathrm{dist}(x, \p M)\}
\]
when only the lateral boundary data 
\begin{equation}
\label{eq:Lat_bound_data}
\mathcal{C}_{g,q}^{\mathrm{lat}}=\{(u|_\Sigma, \p_\nu u|_\Sigma): u\in L^2(Q), \: \mathcal{L}_{g,q}u=0, \: u|_{t=0}=\p_tu|_{t=0}=0\}
\end{equation}
is given. Hence, even for a large measurement time $T>0$, global unique recovery of general time-dependent coefficients of the hyperbolic operator \eqref{eq:potential_conformal} requires information at the beginning $\{t=0\}$ and at the end $\{t=T\}$ of the measurement.

The main result of this paper is as follows.
\begin{thm}
\label{thm:main_result}
Suppose that $(M, g)$ is a CTA manifold of dimension $n \ge 3$ and that Assumption \ref{asu:inj} holds for the transversal manifold $(M_0,g_0)$. Let $T>0$ and $q_i \in C(\overline{Q})$, $i=1, 2$. If $q_1=q_2$ on $\p Q$, then $\mathcal{C}_{g, q_1} = \mathcal{C}_{g, q_2}$ implies that $q_1=q_2$ in $Q$.
\end{thm}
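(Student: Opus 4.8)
\emph{Proof strategy.} The plan is to follow the Carleman-weight / complex geometric optics (CGO) method for inverse problems on CTA manifolds, in its partial-data form, adapted to the hyperbolic operator $\mathcal{L}_{c,g,q}$. Set $q=q_1-q_2$; since $q_1=q_2$ on $\p Q$, the extension of $q$ by zero across $\pM$ is continuous with compact support, and (after enlarging $(M_0,g_0)$ slightly so that non-tangential geodesics of $(M_0,g_0)$ have a bit of room) it lives on $\R_t\times\R_{x_1}\times M_0^{\mathrm{int}}$. The first step is a Green-type integral identity. Given $u_1\in L^2(Q)$ with $\mathcal{L}_{c,g,q_1}u_1=0$, $u_1|_{t=0}=0$ and $\supp u_1|_\Sigma\subset U$, the hypothesis $\mathcal{C}_{g,q_1}=\mathcal{C}_{g,q_2}$ — together with the wellposedness from \cite{Kian_Oksanen} — produces $u_2$ with $\mathcal{L}_{c,g,q_2}u_2=0$ and the same data $(u|_U,u|_{t=T},\p_t u|_{t=0},\p_\nu u|_V)$; since $\supp u_i|_\Sigma\subset U$ this forces $u_1|_\Sigma=u_2|_\Sigma$, so $w=u_1-u_2$ solves $\mathcal{L}_{c,g,q_2}w=-q\,u_1$ with $w|_{t=0}=\p_t w|_{t=0}=w|_{t=T}=0$, $w|_\Sigma=0$, $\p_\nu w|_V=0$. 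As $c=c(x)$ is $t$-independent, $\Box_{c,g}$ is formally self-adjoint for $dV_g\,dt$; pairing the equation for $w$ with a solution $v$ of $\mathcal{L}_{c,g,q_2}v=0$ obeying the complementary, time-reversed conditions $v|_{t=T}=0$ and $\supp v|_\Sigma\subset V$, Green's formula for $\Box_{c,g}$ annihilates every boundary term and leaves
\[
\int_Q q\,u_1\,\overline v\;dV_g\,dt=0.
\]

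\emph{Construction of the solutions.} Using $g=c(e\oplus g_0)$ and the conformal covariance of the Laplacian, the substitution $u=\conf\,\widetilde u$ removes the first-order terms and turns $\mathcal{L}_{c,g,q_i}u=0$ into the model equation $\big(\p_t^2-\p_{x_1}^2-\Delta_{g_0}+\widetilde q_i\big)\widetilde u=0$ on $\R_t\times\R_{x_1}\times M_0$, where $\widetilde q_i=c\,q_i+q_c$ with $q_c$ depending only on $c$; in particular $\widetilde q_1-\widetilde q_2=c\,q$, and, tracking the volume elements, the identity becomes $\int (c\,q)\,\widetilde u_1\,\overline{\widetilde v}\;dx_1\,dV_{g_0}\,dt=0$. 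Since the metric is $t$-independent I would use the separated ansatz $\widetilde u_1=\sin(\mu_1 t)\,w_1(x)$, $\widetilde v=\sin(\mu_2(t-T))\,w_2(x)$ with $\mu_i\in\R$ — this makes $\widetilde u_1|_{t=0}=0$ and $\widetilde v|_{t=T}=0$ automatic — so that $w_i$ must solve the elliptic equation $\big(-(\p_{x_1}^2+\Delta_{g_0})+\widetilde q_i-\mu_i^2\big)w_i=0$ on $M$. For $w_i$ I would take the Carleman-type partial-data CGO solutions on the CTA manifold: with limiting Carleman weight $\varphi=x_1$, a large parameter $s$, auxiliary $x_1$-frequencies $\lambda_i$, and a non-tangential unit-speed geodesic $\gamma$ of $(M_0,g_0)$,
\[
w_1=e^{s(x_1+i\psi)+i\lambda_1 x_1}\big(a_1+r_1\big),\qquad
w_2=e^{-s(x_1-i\psi)+i\lambda_2 x_1}\big(a_2+r_2\big),
\]
where $x_1+i\psi$ solves the transversal eikonal equation with $\psi$ a Gaussian-beam phase concentrating on $\gamma$, the amplitudes $a_i$ solve the associated transport equations along $\gamma$ and vanish near $\p M_0$, the remainders $r_i=O(s^{-1})$ in $L^2$ are absorbed by a boundary Carleman estimate for $-(\p_{x_1}^2+\Delta_{g_0})$ conjugated by $e^{\pm s x_1}$, and — crucially — an extra small convexifying weight is inserted so that $u_1$ has lateral trace supported in $U$ and $v$ in $V$; that extra weight is what produces a small attenuation $\alpha$ along $\gamma$ in the transport equations. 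A cutoff together with an exponentially small correction makes the support conditions hold exactly.

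\emph{Passage to the limit and inversion.} Inserting $\widetilde u_1$ and $\widetilde v$ into the identity, the Carleman weights $e^{\pm s x_1}$ cancel, the amplitude product concentrates as $s\to\infty$ onto the lift of $\gamma$ (the Gaussian beams contributing the factor $e^{\int\alpha}$), and the $t$- and $x_1$-integrations extract, through the $\sin(\mu_i\cdot)$- and $e^{i\lambda_i x_1}$-factors, the Fourier content of $cq$ in $(t,x_1)$. Letting the parameters vary one obtains
\[
I^{\alpha}\big(\widehat{cq}(\mu,\lambda,\cdot)\big)(x,\xi)=0
\]
for every non-tangential geodesic $\gamma=\gamma_{x,\xi}$ and all admissible $(\mu,\lambda)$, where $\widehat{cq}$ is the Fourier transform of $cq$ in $(t,x_1)$ and $\|\alpha\|_{L^\infty(M_0)}<\varepsilon$. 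Assumption \ref{asu:inj} then forces $\widehat{cq}(\mu,\lambda,\cdot)=0$ on $M_0$; since $cq$ is continuous with compact support, $\widehat{cq}$ is entire in $(\mu,\lambda)$ by Paley--Wiener, hence vanishes identically, so $cq\equiv0$ and therefore $q_1=q_2$ in $Q$.

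\emph{Expected main difficulty.} The crux is the partial-data structure: the CGO $u_1$ must have Dirichlet lateral trace supported in $U$ and $v$ in $V$, while $v$ simultaneously kills the uncontrolled Cauchy data left in Green's formula. This is precisely the situation handled by the boundary Carleman estimate for $\varphi=x_1$, whose normal derivative $\p_\nu\varphi$ changes sign exactly across $\p M_+\cap\p M_-$, so that the estimate controls the boundary Cauchy data with the right signs on $\Sigma_+$ and $\Sigma_-$; but it forces the extra convexifying weight — hence the attenuated rather than the plain geodesic ray transform — and it must be threaded through the Gaussian-beam construction without destroying the concentration, all while keeping $\|\alpha\|_{L^\infty(M_0)}$ below the threshold in Assumption \ref{asu:inj} and matching $U',V'$ as neighborhoods of $\p M_\pm$. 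The conformal reduction, the separation in $t$, the transport equations, and the final ray-transform inversion are, by contrast, routine given the CTA literature.
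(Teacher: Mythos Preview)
The decisive gap is the separated ansatz $\widetilde u_1=\sin(\mu_1 t)\,w_1(x)$, $\widetilde v=\sin(\mu_2(t-T))\,w_2(x)$. The potentials $q_i$ --- and hence the conformally reduced $\widetilde q_i=c\,q_i+q_c$ --- are \emph{time-dependent}; that is precisely what the theorem is about. There is therefore no elliptic equation $\big({-}\p_{x_1}^2-\Delta_{g_0}+\widetilde q_i-\mu_i^2\big)w_i=0$ for $w_i$ to solve: applying $\p_t^2-\p_{x_1}^2-\Delta_{g_0}+\widetilde q_i$ to $\sin(\mu_i t)w_i(x)$ leaves the residual $\sin(\mu_i t)\big[(-\p_{x_1}^2-\Delta_{g_0}-\mu_i^2)w_i+\widetilde q_i(t,x)w_i\big]$, and the $t$-dependence of $\widetilde q_i$ cannot be removed by any choice of $w_i$. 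The error is of the same size as the solution itself, so no remainder of order $o(1)$ can correct it. Your reduction to the elliptic partial-data problem on a CTA manifold collapses at this point.

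What the paper does instead is to use a genuinely space-time Carleman weight, $e^{\pm s(\beta t+x_1)}$ with $s=\tfrac1h+i\lambda$ and $\beta\in(\tfrac1{\sqrt3},1)$, multiplied by a Gaussian-beam quasimode $v_s\in C^\infty(M_0)$ that depends on neither $t$ nor $x_1$. The key observation is that in the conjugated operator $e^{\mp s(\beta t+x_1)}h^2\mathcal L_{\widetilde g,\widetilde q}\,e^{\pm s(\beta t+x_1)}$ the potential $\widetilde q$ enters only at order $h^2$; the eikonal and transport equations live purely on $M_0$ and do not see $\widetilde q$ at all, so the time dependence is absorbed into the remainder produced by the Carleman estimate.

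Two further discrepancies are worth flagging. First, the paper does \emph{not} arrange for all boundary terms in Green's formula to vanish. Only the growing solution $u_2$ carries the constraints $u_2|_{t=0}=0$ and $\supp u_2|_\Sigma\subset U$ (built via a Hahn--Banach argument from a boundary Carleman estimate, giving only an $L^2$ remainder); the decaying solution $u_1$ is unconstrained on $\p Q$ but lies in $H^1$ (via an interior Carleman estimate). The surviving boundary terms at $t=T$ and on $\Sigma\setminus V$ are then shown to be $\cO(h^{1/2})$ by a separate boundary Carleman estimate. Your symmetric scheme, with both CGO families carrying boundary constraints, would leave both remainders merely in $L^2$, and the traces needed to justify Green's formula then live in very weak spaces. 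Second, the attenuation in the limiting ray transform does not come from a convexified weight; it is the constant $-\sqrt{1-\beta^2}\,\lambda$ produced by $\Im s$, and smallness for Assumption~\ref{asu:inj} is obtained by taking $|\lambda|$ small and then extending to all $(t,x_1)$-frequencies via Paley--Wiener. No convexification appears anywhere in the argument.
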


\begin{rem}
Theorem \ref{thm:main_result} can be viewed as an extension of \cite{Kian_partial_data} from the Euclidean space, as well as \cite{Kian_Oksanen} from CTA manifolds with a simple transversal manifold $M_0$, to general CTA manifolds. Also, Theorem \ref{thm:main_result} does not follow from our recent work \cite{Liu_Saksala_Yan}, in which the Dirichlet data was measured on the full lateral boundary $\Sigma$. The Dirichlet data in \eqref{eq:Cauchy_data} is only measured on a subset $U$ of $\Sigma$, thus the set of Cauchy data in this paper contains less data than \cite{Liu_Saksala_Yan}.

In the presence of a  damping term, one needs a remainder term decaying in its semiclassical $H^1$-norm. Our construction of such a remainder term in \cite{Liu_Saksala_Yan} is based on interior Carleman estimates for extended manifolds. Thus, it does not give us control over the support of the trace of solution of the wave equation on the boundary $\p Q$, which is required in this paper due to the condition that $\supp u|_{\Sigma} \subset U$ in the set of partial Cauchy data \eqref{eq:Cauchy_data}. In this paper, it is sufficient to construct a CGO solution whose remainder term decays in its $L^2$-norm and satisfies the aforementioned support condition. The construction of such a remainder term differs from the one presented in \cite{Liu_Saksala_Yan}, hence Theorem \ref{thm:main_result} does not fall within the scope of \cite{Liu_Saksala_Yan}.

\end{rem}

\begin{rem}
Theorem \ref{thm:main_result} states the unique determination of continuous potentials from the set of partial Cauchy data $\mathcal{C}_{g,q}$. This is attributed to the technique presented in this paper since the concentration property of Gaussian beam quasimodes (Proposition \ref{prop:limit_behavior}) requires continuity.
\end{rem}




\begin{rem}
Assumption \ref{asu:inj} of this paper is different from the literature concerning inverse problems for elliptic operators on CTA manifolds, see for instance \cite{Ferreira_Kur_Las_Salo,Krupchyk_Uhlmann_magschr}. These works assume the invertibility of the geodesic ray transform. In the case of elliptic operators, where there is only one Euclidean direction $x_1$, the authors reduced the inverse problem to the geodesic ray transform and 
recovered the Taylor expansion of the unknown function by differentiating an expression similar to \eqref{eq:Fou_geo_trans} with respect to the variable $\lambda$ at zero.  However, this approach is not applicable in our case, as the mapping $(\lambda, \beta)\mapsto -\lambda(\beta, 1)$, appearing in \eqref{eq:Fou_geo_trans}, is a diffeomorphism only if $\lambda \ne 0$. Thus, computing $\lambda$ and $\beta$-derivatives of \eqref{eq:Fou_geo_trans} at $\lambda=0$ will not give us the Taylor expansion of the unknown potential at the origin. 
\end{rem}

\subsection{Previous literature}
In this section we only review some literature concerning the recovery of time-dependent coefficients appearing in hyperbolic equations from boundary measurements. There is also a vast amount of literature about the time-independent case, which has been discussed for instance in \cite{Liu_Saksala_Yan}. 

Most of the time-dependent results rely on the use of geometric optics (GO) solutions to the hyperbolic equation. This approach was first implemented in \cite{Stefanov} to determine time-dependent coefficients of hyperbolic equations from the knowledge of scattering data by using properties of the light-ray transform. In the Euclidean setting, recovery of a time-dependent potential $q$ from the full lateral boundary data $\mathcal{C}_q^{\mathrm{lat}}$ on the infinite cylinder $\R\times \Omega$, where $\Omega$ is a bounded Euclidean domain, was established in \cite{Ramm_Sjo}. On a finite cylinder $(0, T)\times \Omega$ with $T>\mathrm{diam}(\Omega)$, it was proved in \cite{Rakesh_Ramm} that $\mathcal{C}_q^{\mathrm{lat}}$ determines $q$ uniquely in the optimal subset $\mathcal{D}$ of $(0, T)\times \Omega$. A uniqueness result for determining a general time-dependent potential $q$ from the set of partial Cauchy data $\mathcal{C}_{g,q}$ was established in \cite{Kian_partial_data}.

Turning attention to results in the Riemannian manifolds setting, global unique determination of a time-dependent potential $q$ from both full and partial boundary measurements was proved in \cite{Kian_Oksanen} on a CTA manifold $(M, g)$ with a simple transversal manifold $M_0$. In other classes of manifolds, it was recently established in \cite{Alexakis_Feiz_Oksanen_22} that a set of full Cauchy data determines the potential $q$ uniquely in Lorentzian manifolds that satisfy certain two-sided curvature bounds and some other geometric assumptions. This curvature bound was weakened in \cite{Alexakis_Feiz_Oksanen_23} near Minkowski geometry. The proof of \cite{Alexakis_Feiz_Oksanen_22} is based on a new optimal unique continuation theorem and a generalization of the Boundary Control Method, originally developed in \cite{Belishev}, to the cases when the dependence of coefficients on time is not analytic. Indeed, the Boundary Control Method, which is a powerful tool to prove uniqueness results for time-independent coefficients appearing in hyperbolic equations \cite{Belishev, Belishev_Kurylev, Katchalov_Kurylev, KKL_book, LaOk}, is not applicable to recover time-dependent coefficients in general since it relies on an application of the unique continuation theorem analogous to \cite{Tataru}, which may fail without the aforementioned real analyticity assumption, see \cite{Alinhac,Alinhac_Baoendi}.

Aside from uniqueness results concerning only the potential, there is also some literature about determining time-dependent first order perturbations appearing in hyperbolic equations from boundary measurements as well. It was established in \cite{Kian_damping} that boundary data  $\mathcal{C}_{g,q}$, with $U=\Sigma$, determines time-dependent damping coefficients and potentials uniquely in the Euclidean setting. Very recently the authors extended this result to the setting of CTA manifolds in \cite{Liu_Saksala_Yan}. 

If a full time-dependent vector field perturbation appears in the hyperbolic equation, similar to the magnetic Schr\"odinger operator, it is only possible to recover the vector field up to a space-time  differential of a test function in $Q$. A global uniqueness result was proved in \cite{Eskin} when the dependence of coefficients on the time variable is real-analytic. This analyticity assumption was removed in \cite{Salazar}, which proved a uniqueness result on an infinite cylinder $\R\times \Omega$, where $\Omega$ is a bounded domain in $\R^n$. We refer readers to \cite{Krishnan_Vashisth} for a global uniqueness result from partial Dirichlet-to-Neumann map on a finite cylinder $[0, T]\times \Omega$ with $T>\diam (\Omega)$.  In the Riemannian setting, it was established in \cite{Feizmohammadi_et_all_2019} that the lateral boundary data $\mathcal{C}_{g,q}^{\mathrm{lat}}$ determines the first and zeroth order perturbations up to the described gauge invariance on a certain non-optimal subset of $Q$. This result was obtained by reducing the problem to the inversion of the light-ray transform of the Lorentzian metric $-dt^2+g(x)$. The authors of \cite{Feizmohammadi_et_all_2019} also showed that the light-ray transform is invertible whenever the respective geodesic ray transform on the spatial manifold is invertible. To the best of our knowledge, the global (optimal) recovery of a one-form and a potential function, appearing in a hyperbolic operator, from a set of partial Cauchy data $\mathcal{C}_{g,q}$ \eqref{eq:Cauchy_data} (lateral boundary data $\mathcal{C}_{g,q}^{\mathrm{lat}}$ \eqref{eq:Lat_bound_data}) is still an open problem. 


\subsection{Outline for the proof of Theorem \ref{thm:main_result}}
The two main ingredients of the proof are the integral identity \eqref{eq:int_id_q}, which was derived in \cite{Kian_partial_data,Kian_Oksanen} from the set of partial Cauchy data $\mathcal{C}_{g,q}$, and the construction of complex geometric optics (CGO) solutions. Specifically, we shall construct a family of exponentially decaying solutions $u_1$ to the equation $\mathcal{L}_{c,g, q}^\ast u_1=0$ of the form
\[
u_1(t, x)=e^{-s(\beta t+\varphi(x))}(v_s(t,x)+r_{1}(t,x)), \quad (t, x)\in Q.
\]
On the other hand, due to the restrictions $\supp u|_{\Sigma}\subset U$ and $u|_{t=0}=0$ in $\mathcal{C}_{g,q}$, we need to construct a family of exponentially growing solutions $u_2$ to the equation $\mathcal{L}_{c ,g, q}u_2 =0$, which look like
\[
u_2(t, x)=e^{s(\beta t+\varphi(x))}(w_s(t,x)+r_{2}(t,x)), \quad (t, x)\in Q,
\]
and satisfy these two boundary conditions.
Here $s=\frac{1}{h}+i\lambda$ is a complex number, $h\in (0,1)$ is a semiclassical parameter, $\lambda\in \R$ and $\beta\in (\frac{1}{\sqrt{3}}, 1)$ are some fixed numbers, $v_s$ and $w_s$ are Gaussian beam quasimodes, $r_{1}$ and $r_{2}$ are correction terms that vanish in the limit $h\to 0$, and the function $\varphi(x)=x_1$ is a limiting Carleman weight on $M$. We choose the values of $\beta$ as above because the construction of $r_1$ relies on an application of an interior Carleman estimate \cite[Proposition 3.6]{Liu_Saksala_Yan}. This is derived from a boundary Carleman estimate \cite[Proposition 3.1]{Liu_Saksala_Yan}, which is valid for $\beta \in (\frac{1}{\sqrt{3}}, 1)$. For the construction of $r_2$, we may take $\beta \in [\frac{1}{2},1]$, see \cite[Theorem 4.1]{Kian_Oksanen}. 


Since the transversal manifold $(M_0,g_0)$ is not necessarily simple, the approach based on global CGO solutions, which was constructed in geodesic polar coordinates in \cite{Kian_Oksanen}, is not applicable under the geometric assumptions of this paper. In particular, it is not known whether one can solve the necessary eikonal and transport equations appearing in \eqref{eq:conjugated_L1} globally without the simplicity assumption. In  Proposition \ref{prop:Gaussian_beam} we construct Gaussian beam quasimodes for every non-tangential geodesic in the transversal manifold $M_0$  by using techniques originally developed in solving inverse problems for elliptic operators, see for instance \cite{Cekic,Ferreira_Kur_Las_Salo,Krupchyk_Uhlmann_magschr,Yan}, as well as \cite{Liu_Saksala_Yan} for hyperbolic operators. These quasimodes concentrate on the geodesic in the semiclassical limit $h \to 0$, as we shall explain in Proposition \ref{prop:limit_behavior}. The construction of the remainder terms $r_1$ and $r_2$ are given in Section \ref{sec:CGO_solution}. Here $r_1$ needs to have a stronger decay property, namely, being $\cO(h^{1/2})$ with respect to the semiclassical $H^1$-norm. This is achieved  with an interior Carleman estimate \cite[Proposition 3.6]{Liu_Saksala_Yan}. In order to find a remainder $r_2$ such that $u_2$ satisfies the required boundary conditions in the set of partial Cauchy data \eqref{eq:Cauchy_data}, we follow a different approach, which was developed in \cite{Kian_partial_data,Kian_Oksanen}.

To complete the proof of Theorem \ref{thm:main_result}, we shall substitute the CGO solutions \eqref{eq:CGO_v_old} and \eqref{eq:exp_grow_soln} into the integral identity \eqref{eq:int_id_q} and pass to the limit $h\to 0$. Lemma \ref{lem:rhs_est} implies that the right-hand side of \eqref{eq:int_id_q} vanishes in the limit $h\to 0$. The proof of this lemma requires a decay in the semiclassical $H^1$-norm for $r_1$, which is given in estimate \eqref{eq:est_r1}. Meanwhile, estimates \eqref{eq:estimate_v}, \eqref{eq:est_r1}, and \eqref{eq:est_r2}, in conjunction with the concentration property of Gaussian beam quasimodes Proposition (\ref{prop:limit_behavior}), yield that the left-hand side of \eqref{eq:int_id_q} converges to the attenuated geodesic ray transform involving the function $q_1-q_2$ in the limit $h\to 0$. This is the reason why we need Assumption \ref{asu:inj} to complete the proof.
%



The paper is organized as follows. We begin with the construction of Gaussian beam quasimodes in Section \ref{sec:Gaussian_beam}. In Section \ref{sec:CGO_solution} we construct both exponentially decaying and growing CGO solutions. Finally, we present the proof of Theorem \ref{thm:main_result} in Section \ref{sec:proof}.

\subsection*{Acknowledgments}
We would like to express our gratitude to Katya Krupchyk, Joonas Ilmavirta, and Hanming Zhou for their valuable discussions and suggestions. We are also very grateful to the anonymous referees for their feedback, which led to significant improvements of the paper. T.S. is partially supported by the National Science Foundation (DMS 2204997). L.Y. is partially supported by the National Science Foundation (DMS 2109199).

\section{Construction of Gaussian Beam Quasimodes}
\label{sec:Gaussian_beam}
Let $(M, g)$ be a CTA manifold given by Definition \ref{def:CTA_manifolds} and $T>0$. 
The goal of this section is to construct Gaussian beam quasimodes with desirable concentration properties. Gaussian beam quasimodes have been utilized extensively to solve inverse problems in Riemannian manifolds. We refer readers to \cite{Cekic, Ferreira_Kenig_Salo_Uhlmann, Ferreira_Kur_Las_Salo, Krupchyk_Uhlmann_magschr,Yan} for some applications in elliptic operators and \cite{Feizmohammadi_et_all_2019,KKL_book,Liu_Saksala_Yan} in hyperbolic operators. 

To streamline the construction, we first note that due to the conformal properties of the Laplace-Beltrami operator explained in \cite{Ferreira_Kur_Las_Salo}, we have
\begin{equation}
\label{eq:conformal_equivalence}
c^{\frac{n+2}{4}} (-\Delta_g)  (\conf u)= -\left(\Delta_{\tilde{g}}u+ \big(c^{\frac{n+2}{4}}\Delta_g(\conf)\big)u\right).
\end{equation}
Also, since the conformal factor $c$ is independent of the time variable $t$, we get
\begin{equation}
\label{eq:conf_damp}
c^{\frac{n+2}{4}} \p_t^2 (\conf u)=c \p_t^2 u.
\end{equation}
Thus, equations \eqref{eq:conformal_equivalence} and \eqref{eq:conf_damp} yield the following identity for the operator $\mathcal{L}_{c,g,q}$:
\begin{equation}
\label{eq:equivalence_operator}
c^{\frac{n+2}{4}}\circ \mathcal{L}_{c,g,q} \circ \conf = \mathcal{L}_{\tilde{g},  \tilde{q}},
\end{equation}
where
\begin{equation}
\label{eq:equiv_coeff}
\tilde{g}=e\oplus g_0 \quad \text{and} \quad  \tilde{q}=c(q-c^{\frac{n-2}{4}}\Delta_g(\conf)).
\end{equation}
Hence, by replacing the metric $g$ and the potential $q$ with $\tilde{g}$ and $\tilde{q}$, respectively, we may assume that the conformal factor $c=1$. In this section we shall use this assumption and consider the leading order wave operator $\Box_{e\oplus g_0}=\p_t^2-\Delta_{e\oplus g_0}$. For simplicity, let us write  $\mathcal{L}_{g,q}$ for $\mathcal{L}_{c,g,q}$ with $c=1$. Also, throughout the rest of this paper we shall denote $\mathcal{L}_{g, q}^\ast =\mathcal{L}_{g,  \overline{q}}$ the formal $L^2$-adjoint of the operator $\mathcal{L}_{g,q}$. 

We are now ready to state and prove the main result of this section.
\begin{prop}
\label{prop:Gaussian_beam}
Let $(M, g)$ be a smooth CTA manifold with boundary. Let $T>0$, and let $s=\frac{1}{h}+i\lambda$, $0<h\ll 1$, $\lambda \in \R$, and $\beta \in (0,1)$ fixed.
Let $q \in C(\overline{Q})$. Then for every unit speed non-tangential geodesic $\gamma$ of the transversal manifold $(M_0,g_0)$, there exist a one-parameter family of Gaussian beam quasimodes $v_s\in C^\infty(M_0)$ such that the estimates
\begin{equation}
\label{eq:estimate_v}
\begin{aligned}
&\|v_s\|_{L^2(M_0)}=\cO(1), 
\quad
\|e^{s(\beta t+x_1)}h^2\mathcal{L}^\ast_{g, q}e^{-s(\beta t+x_1)}v_s\|_{L^2(Q)} =\cO(h^{3/2}),
\\
&\|e^{-s(\beta t+x_1)}h^2\mathcal{L}_{g, q}e^{s(\beta t+x_1)}v_s\|_{L^2(Q)}=\cO(h^{3/2})
\end{aligned}
\end{equation}
hold as $ h\to 0$.
\end{prop}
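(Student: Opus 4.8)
The plan is to construct $v_s$ by the standard Gaussian beam ansatz along the geodesic $\gamma$ in the transversal manifold, lifted trivially in the $(t,x_1)$ variables, and then verify the two operator bounds by a Taylor expansion of the conjugated operator in Fermi coordinates. First I would observe that, after the conformal reduction already carried out in the excerpt, we may take $c=1$ and work with $\mathcal{L}_{g,q}=\partial_t^2-\partial_{x_1}^2-\Delta_{g_0}+q$. Conjugating by the weight, a short computation gives
\[
e^{\mp s(\beta t+x_1)}h^2\mathcal{L}_{g,q}e^{\pm s(\beta t+x_1)} = h^2\partial_t^2 - h^2\partial_{x_1}^2 - h^2\Delta_{g_0} \pm 2hs\,h\beta\partial_t \pm 2hs\,h\partial_{x_1} + s^2h^2(\beta^2-1) + h^2 q,
\]
and since $s=\tfrac1h+i\lambda$ we have $s^2h^2 = 1 + 2i\lambda h - \lambda^2 h^2$, so the leading symbol in the transversal variables is the semiclassical Laplace–Beltrami operator $h^2\Delta_{g_0}$ together with a factor $(\beta^2-1)+\tfrac{h^2\Delta_t}{\cdots}$; crucially $\beta^2-1\neq 0$, so the Hamilton–Jacobi equation we must solve is the transversal eikonal equation $|\nabla^{g_0}\Theta|^2 = 1-\beta^2$ (up to the harmless first-order and lower-order pieces), which is exactly the equation whose WKB solution concentrates on a single geodesic $\gamma$ of $(M_0,g_0)$.

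The core construction is then the usual one: take Fermi coordinates $(y^1,y')$ near $\gamma$ with $y^1$ the arclength parameter and $y'$ the transversal coordinates, so that $\gamma = \{y'=0\}$. Seek $v_s = h^{-(n-2)/4}\,e^{i\Theta(y)/h}\,(a_0(y) + h a_1(y) + \cdots)\chi(y'/\delta)$ where $\Theta = y^1\sqrt{1-\beta^2} + \tfrac12 H(y^1)y'\cdot y' + \cdots$ is a complex phase with $\operatorname{Im} H(y^1)$ positive definite (obtained by solving the matrix Riccati equation along $\gamma$, whose global solvability on a non-tangential, hence finite-length, geodesic segment is classical), $a_0$ solves the first transport equation (a linear ODE along $\gamma$, again globally solvable), $a_1$ absorbs the lower-order terms, and $\chi$ is a cutoff supported in a $\delta$-neighborhood of $\gamma$. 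The amplitude factor $h^{-(n-2)/4}$ is chosen exactly so that $\|v_s\|_{L^2(M_0)} = \mathcal{O}(1)$: the Gaussian profile $e^{-\operatorname{Im} H(y^1)|y'|^2/(2h)}$ contributes $h^{(n-2)/4}$ from the $(n-2)$ transversal directions, cancelling the prefactor. One then checks that the error produced by $v_s$ splits into (i) the contribution from $\Theta,a_0,a_1$ not solving the eikonal/transport equations exactly, which vanishes to the order we arranged and is $\mathcal{O}(h^{3/2})$ in $L^2$ once the $h^{(n-2)/4}$ Gaussian gain is counted against the polynomial-in-$y'/\sqrt h$ weights (the $h^2$ in front of $\mathcal{L}$ is what upgrades the naive $\mathcal{O}(h^{1/2})$ to $\mathcal{O}(h^{3/2})$), and (ii) the commutator with the cutoff $\chi(y'/\delta)$, which is supported where $|y'|\gtrsim\delta$ and is therefore exponentially small, $\mathcal{O}(e^{-c\delta^2/h})$, hence negligible. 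The lower-order term $h^2q$ with $q\in C(\overline Q)$ only continuous is handled because it appears multiplied by $h^2$ and by the $L^2$-bounded $v_s$, contributing $\mathcal{O}(h^2)$; this is precisely why continuity of $q$ suffices here (we never differentiate $q$).

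The main obstacle, and the place requiring the most care, is getting the powers of $h$ to line up to give $\mathcal{O}(h^{3/2})$ rather than merely $\mathcal{O}(h)$ or $\mathcal{O}(h^{1/2})$: the naive eikonal error is $\mathcal{O}(|y'|^2) = \mathcal{O}(h)$ pointwise on the support of the Gaussian, which after integrating the $|v_s|^2$-type density (gain $h^{(n-2)/4}$ from Gaussian, and the $|y'|^2 \sim h$ factors) and multiplying by the $h^2$ from $h^2\mathcal{L}$ must be tracked termwise through the first two transport equations. The standard resolution is to solve the eikonal equation to third order in $y'$ and the transport equations to the appropriate orders so that the residual is $\mathcal{O}(|y'|^3 + h|y'|)$, which under the Gaussian measure is $\mathcal{O}(h^{3/2})$ after the $h^2$ prefactor; bookkeeping this, uniformly in the parameters $t,x_1$ (which enter only through the constants $\beta,\lambda$ and the compactness of $[0,T]\times M$), is the technical heart of the proof. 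Everything else — existence of Fermi coordinates along a non-tangential geodesic, solvability of the Riccati and transport ODEs on a finite segment, the exponential smallness of the cutoff commutator — is routine and parallels the elliptic constructions in \cite{Ferreira_Kur_Las_Salo,Krupchyk_Uhlmann_magschr} and the hyperbolic one in \cite{Liu_Saksala_Yan}, the only new feature being the extra $\partial_t$ direction, which contributes nothing problematic since $\beta$ is a fixed nonzero constant and the $t$-dependence of the phase is linear.
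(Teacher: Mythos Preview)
Your proposal is correct and follows essentially the same route as the paper: Fermi coordinates along $\gamma$, complex phase $\Theta=\sqrt{1-\beta^2}(\tau+\tfrac12 H(\tau)y\cdot y)$ with $H$ solving a Riccati equation, amplitude normalized by $h^{-(n-2)/4}$, and the key Gaussian estimate $\|h^{-(n-2)/4}|y|^k e^{-d|y|^2/h}\|_{L^2}=\mathcal{O}(h^{k/2})$ applied with $k=3$ for the eikonal error and $k=1$ for the transport error. Two minor differences: the paper gets away with a single amplitude term $b_0(\tau)$ (your $a_1$ is unnecessary since the $h^2 q$ and $h^2\Delta_{g_0}b$ terms already land at $\mathcal{O}(h^2)$), and the paper treats the gluing across possible self-intersections of $\gamma$ explicitly via a partition of unity in the $\tau$ variable following \cite{Kenig_Salo}, whereas you absorb this into the phrase ``existence of Fermi coordinates along a non-tangential geodesic''---which is not quite right globally if $\gamma$ self-intersects, though the fix is indeed routine and in the references you cite.
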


\begin{proof}
Let $L>0$ be the length of the geodesic $\gamma=\gamma(\tau)$. By \cite[Example 9.32]{lee2012smooth}, we may embed $(M_0, g_0)$ into a larger closed manifold $(\hat{M}_0, g_0)$ of the same dimension. Also, we extend $\gamma$ as a unit speed geodesic in $\hat{M}_0$. Since $\gamma$ is non-tangential, we can choose $\varepsilon>0$ such that $\gamma(\tau)\in \hat{M}_0\setminus M_0$ and does not self-intersect for $\tau \in [-2\varepsilon, 0) \cup (L, L+2\varepsilon]$. 

Our goal is to construct Gaussian beam quasimodes near $\gamma([-\varepsilon, L+\varepsilon])$. We start by fixing a point $z_0=\gamma(\tau_0)$ on $\gamma([-\varepsilon, L+\varepsilon])$ and construct the quasimode locally near $z_0$. Let $(\tau, y) \in \Omega:=\{(\tau, y)\in \R \times \R^{n-2}: |\tau-\tau_0|<\delta,\: |y|<\delta'\}$, $\delta, \delta'>0$, be Fermi coordinates near $z_0$, see \cite[Lemma 7.4]{Kenig_Salo}. We may assume that the coordinates $(\tau, y)$ extend smoothly to a neighborhood of $\overline{\Omega}$.

We observe that near $z_0=\gamma(\tau_0)$ the trace of the geodesic $\gamma$ is given by the set $\Gamma=\{(\tau, 0): |\tau-\tau_0|<\delta\}$, and in these Fermi coordinates we have
\begin{equation}
\label{eq:g0_in_Fermi_coordinates}
g_0^{jk}(\tau,0)=\delta^{jk} \quad \text{and} \quad  \p_{y_l}g_0^{jk}(\tau, 0)=0.
\end{equation}
Hence, it follows from Taylor's theorem that for small $|y|$ we can write
\begin{equation}
\label{eq:g0_near_geo}
g_0^{jk}(\tau, y)=\delta^{jk}+\mathcal{O}(|y|^2).
\end{equation}

We first construct quasimodes $v_s$ for the conjugated operator $e^{s(\beta t+x_1)}\mathcal{L}_{g, q}^\ast e^{-s(\beta t+x_1)}$.
To that end, let us consider a Gaussian beam ansatz
\begin{equation}
\label{eq:Gaussian_beam_form}
v_s(\tau, y;h)=e^{is\Theta(\tau,y)} b(\tau, y;h).
\end{equation}
Compared to \cite{Liu_Saksala_Yan}, the quasimode $v_s$ constructed in this paper is independent of the Euclidean variables $(t,x_1)$. 

To obtain the quasimode $v_s$, we need to find a phase function $\Theta \in C^\infty(\Omega, \C)$  such that
\begin{equation}
\label{eq:property_of_phi}
\Im \Theta \ge 0, \quad \Im \Theta|_\Gamma=0, \quad \Im \Theta(\tau, y) \sim |y|^2,
\end{equation}
as well as an amplitude $b\in C^\infty(\Omega, \C)$ such that $\supp (b(\tau, \cdot)) \subset \{|y|<\delta'/2\}$. We shall achieve this by following the ideas originally presented in  \cite{KKL_book, Ralston_1982}.

Since $\Theta$ is independent of the Euclidean variables ($t,x_1)$ and $g=e\oplus g_0$, we have
\begin{equation}
\label{eq:conjugated_dt}
e^{-is\Theta}\p_t^2(e^{is\Theta} b)= 0
\end{equation}
and
\begin{equation}
\label{eq:conjugated_Laplacian}
\quad  e^{-is\Theta}(-\Delta_g)e^{is\Theta} b=-\Delta_{g_0} b-is[2\langle \nabla_{g_0}\Theta, \nabla_{g_0}b\rangle_{g_0} +(\Delta_{g_0}\Theta)b] +s^2\langle \nabla_{g_0}\Theta, \nabla_{g_0}\Theta\rangle_{g_0}b.
\end{equation}
Therefore, we obtain from \eqref{eq:conjugated_dt} and \eqref{eq:conjugated_Laplacian} that
\begin{equation}
\label{eq:conjugated_L1}
\begin{aligned}
e^{s(\beta t+x_1)} h^2\mathcal{L}^\ast_{g,q}e^{-s(\beta t+x_1)}v_s
= &
h^2e^{is\Theta}[s^2(\langle \nabla_{g_0}\Theta, \nabla_{g_0}\Theta\rangle_{g_0} - (1 - \beta^2))b
\\
&+
s(- 2i\langle \nabla_{g_0}\Theta, \nabla_{g_0}b\rangle_{g_0} - i(\Delta_{g_0}\Theta)b)
\\
&+ (-\Delta_{g_0}+\overline{q})b].
\end{aligned}
\end{equation}
From the computation above, we see that in order to verify the estimates in \eqref{eq:estimate_v}, we need to find a phase function $\Theta$ and an amplitude $b$ such that they approximately solve the eikonal and transport equations appearing on right-hand side of \eqref{eq:conjugated_L1} as multipliers of the terms $s^2$ and $s$, respectively.

Following similar arguments as in \cite{Ferreira_Kur_Las_Salo,KKL_book,Krupchyk_Uhlmann_magschr,Ralston_1977,Ralston_1982}, we aim to find $\Theta(\tau, y)\in C^\infty(\Omega, \C)$ such that
\begin{equation}
\label{eq:eikonel_eq}
\langle \nabla_{g_0}\Theta, \nabla_{g_0}\Theta\rangle_{g_0}-(1-\beta^2)=\mathcal{O}(|y|^3), \quad y\to 0,
\end{equation}
and 
\begin{equation}
\label{eq:imaginarypart_phi}
\Im \Theta\ge d|y|^2
\end{equation}
for some constant $d>0$ that depends on $\beta$. By following the arguments in \cite[Subsection 4.1.3] {Liu_Saksala_Yan}, see also \cite{Ferreira_Kur_Las_Salo, Ralston_1977, Ralston_1982}, we choose
\begin{equation}
\label{eq:phi}
\Theta(\tau, y)=\sqrt{1-\beta^2}(\tau+\frac{1}{2}H(\tau)y\cdot y),
\end{equation}
where the smooth complex-valued symmetric matrix $H(\tau)$ is the unique solution of the initial value problem for the matrix Riccati equation
\begin{equation}
\label{eq:Riccati_eq}
\dot{H}(\tau)+H(\tau)^2=F(\tau), \quad H(\tau_0)=H_0, \quad \text{ for } \tau \in \R.
\end{equation}
Here $\Im H(\tau)$ is positive definite, $H_0$ is a complex symmetric matrix such that $\Im(H_0)$ is positive definite, and $F(\tau)$ is a suitable symmetric matrix. We refer readers to \cite[Lemma 2.56]{KKL_book} for details.

We next look for an amplitude $b$ of the form 
\begin{equation}
\label{eq:amp_form_regu}
b(\tau, y; h)=h^{-\frac{n-2}{4}}b_0(\tau)\chi(y/\delta'),
\end{equation}
where $b_0\in C^\infty ([\tau_0-\delta, \tau_0+\delta])$ depends on only the travel time $\tau$ and satisfies the approximate transport equation
\begin{equation}
\label{eq:condition_b0_regu}
- 2i\langle \nabla_{g_0}\Theta, \nabla_{g_0}b_0\rangle_{g_0} - i(\Delta_{g_0}\Theta)b_0 =\cO(|y|),
\end{equation}
and the cut-off function $\chi \in C^\infty_0(\R^{n-2})$ is such that $\chi=1$ for $|y|\le 1/4$ and $\chi=0$ for $|y|\ge 1/2$.

In order to find the function $b_0$ that satisfies \eqref{eq:condition_b0_regu}, we first compute $\langle \nabla_{g_0}\Theta, \nabla_{g_0}b_0\rangle_{g_0}$. To this end, we deduce from \eqref{eq:phi} that
\begin{equation}
\label{eq:dtau_phi}
\p_\tau\Theta(\tau, y)=\sqrt{1-\beta^2}+\mathcal{O}(|y|^2).
\end{equation} 
Therefore, we get from \eqref{eq:g0_near_geo} that
\begin{equation}
\label{eq:dphidb0}
\langle \nabla_{g_0}\Theta, \nabla_{g_0}b_0\rangle_{g_0}=\sqrt{1-\beta^2}\p_\tau b_0 +\mathcal{O}(|y|^2)\p_\tau b_0.
\end{equation}

We next compute $\Delta_{g_0}\Theta$ near the geodesic $\gamma$. To that end, it follows from \eqref{eq:g0_near_geo} and \eqref{eq:phi} that 
\[
(\Delta_{g_0}\Theta)(\tau, 0) = \sqrt{1-\beta^2}\delta^{jk}H_{jk} = \sqrt{1-\beta^2} \tr H(\tau).
\]
This implies that
\begin{equation}
\label{eq:lap_phi_geo}
(\Delta_{g_0}\Theta)(\tau, y)=\sqrt{1-\beta^2} \tr H(\tau)+\mathcal{O}(|y|).
\end{equation}

To achieve \eqref{eq:condition_b0_regu}, we require that $b_0(\tau)$ satisfies
\begin{equation}
\label{eq:transport_b0}
\p_\tau b_0=-\frac{1}{2}\tr H(\tau)b_0.
\end{equation}
Hence, we have
\[
b_0(\tau) = e^{f_1(\tau)},\quad \text{where} \quad  \p_\tau f_1(\tau)= -\frac{1}{2}\tr H(\tau).
\]
Finally, we get \eqref{eq:condition_b0_regu} from \eqref{eq:dtau_phi}--\eqref{eq:transport_b0} due to the $y$-independence of $b_0$.

We next prove the estimates in \eqref{eq:estimate_v} for the quasimode
\begin{equation}
\label{eq:beam_form_regu}
v_s(\tau, y;h)=e^{is\Theta(\tau, y)}b(\tau,y; h)=e^{is\Theta(\tau, y)}h^{-\frac{n-2}{4}}b_0(\tau)\chi(y/\delta')
\end{equation}
locally in $\Omega$, where $\Omega\subset M_0$ is the domain of Fermi coordinates near the point $z_0=\gamma(\tau_0)$. 
To proceed, we shall need the following estimate for any $k\in \R$:
\begin{equation}
\label{eq:L2_power_y}
\begin{split}
\|h^{-\frac{n-2}{4}} |y|^k e^{-\frac{\Im \Theta}{h}}\|_{L^2(|y| \le \delta'/2)}
&\leq 
\|h^{-\frac{n-2}{4}} |y|^k e^{-\frac{d}{h}|y|^2}\|_{L^2(|y| \le \delta'/2)}
\\
& \leq 
\bigg(\int_{\R^{n-2}} h^{k}|z|^{2k} e^{-2d|z|^2}dz\bigg)^{1/2}
=\cO(h^{k/2}), \quad h \to 0,
\end{split}
\end{equation}
where  we  have applied estimate \eqref{eq:imaginarypart_phi} and the change of variable $z=h^{-1/2}y$.
Then it follows from \eqref{eq:imaginarypart_phi} and \eqref{eq:L2_power_y} with $k=0$ that
\begin{equation}
\label{eq:estimate_v_int}
\begin{aligned}
\|v_s\|_{L^2(\Omega)} &\le \|b_0\|_{L^\infty([\tau_0-\delta, \tau_0+\delta])} \|e^{is\Theta}h^{-\frac{n-2}{4}}\chi(y/\delta')\|_{L^2(\Omega)}
\\
&\le \mathcal{O}(1)\|h^{-\frac{n-2}{4}}e^{-\frac{d}{h} |y|^2}\|_{L^2(|y|\le \delta'/2)}=\mathcal{O}(1), \quad h\to 0.
\end{aligned}
\end{equation}

Let us  now proceed to estimate $\|e^{s(\beta t+x_1)}\mathcal{L}_{g,\overline{q}}e^{-s(\beta t+x_1)}v_s\|_{L^2(\Omega)}$, which requires estimating each term on the right-hand side of \eqref{eq:conjugated_L1}.  For the first term, by utilizing \eqref{eq:eikonel_eq},  \eqref{eq:imaginarypart_phi}, and \eqref{eq:L2_power_y} with $k=3$, we obtain
\begin{equation}
\label{eq:est_first_term}
\begin{aligned}
&h^2\|e^{is\Theta}s^2 (\langle \nabla_{g_0}\Theta, \nabla_{g_0}\Theta\rangle_{g_0} - (1 - \beta^2))b\|_{L^2(\Omega)}
\\
&=h^2\|e^{is\Theta}s^2 h^{-\frac{n-2}{4}}(\langle \nabla_{g_0}\Theta, \nabla_{g_0}\Theta\rangle_{g_0} - (1 - \beta^2))b_0\chi(y/\delta')\|_{L^2(\Omega)}
\\
&\le \cO(1) \|h^{-\frac{n-2}{4}} |y|^3 e^{-\frac{d}{h} |y|^2}\|_{L^2(|y| \le \delta'/2)} =\cO(h^{3/2}), \quad  h \to 0.
\end{aligned}
\end{equation}

We next estimate the second term on the right-hand side of \eqref{eq:conjugated_L1}. From a direct computation, we get
\[
|e^{is\Theta}|=e^{-\frac{1}{h}\Im \Theta}e^{-\lambda \Re \Theta}=e^{-\frac{\sqrt{1-\beta^2}}{2h} \Im H(\tau)y\cdot y}e^{-\lambda\sqrt{1-\beta^2}\tau}e^{-\lambda \cO(|y|^2)}.
\]
We observe that $e^{-\frac{1}{h}}=\cO(h^\infty)$. Here we say that $f=\cO(h^\infty)$ if $f=\cO(h^n)$ for every $n \in \N$. Therefore,  it follows from \eqref{eq:imaginarypart_phi} that on the support of $ \nabla_{g_0}\chi(y/\delta')$ we have
\[
|e^{is\Theta}|
\leq  
e^{-\frac{\tilde d}{h}} \quad \text{for some } \tilde d>0.
\]
Thus, by using equation \eqref{eq:condition_b0_regu}, estimate \eqref{eq:L2_power_y} with $k=1$, along with the triangle inequality, we have
\begin{equation}
\label{eq:est_second_term}
\begin{aligned}
&h^2 \|e^{is\Theta}s(-2i\langle \nabla_{g_0}\Theta, \nabla_{g_0}b\rangle_{g_0} - i(\Delta_{g_0}\Theta)b)\|_{L^2(\Omega)}
\\
&\le \cO(h) \|e^{is\Theta}h^{-\frac{n-2}{4}} [|y|\chi(y/\delta')-2i\n{\nabla_{g_0}\Theta, \nabla_{g_0}\chi(y/\delta')}_{g_0}]\|_{L^2(\Omega)}
\\
& \le \cO(h) \|h^{-\frac{n-2}{4}} |y| e^{-\frac{d}{h}|y|^2}\|_{L^2(|y| \le \delta'/2)} +\cO(e^{-\frac{\tilde{d}}{h} })
\\
&=\cO(h^{3/2}), \quad  h \to 0.
\end{aligned}
\end{equation}

Lastly, we estimate the third term on the right-hand side of \eqref{eq:conjugated_L1}. Since the amplitude $b$ is independent of $t$, it suffices to estimate  
the term involving $\Delta_g$ and the lower order term. To that end, we apply estimate \eqref{eq:L2_power_y} with $k=0$ to get
\begin{equation}
\label{eq:est_laplacian}
h^2 \|e^{is\Theta}(-\Delta_gb)\|_{L^2(\Omega)}
\le
\cO(h^2)\|h^{-\frac{n-2}{4}} e^{-\frac{d}{h} |y|^2}\|_{L^2(|y| \le \delta'/2)} 
=\cO(h^2), \quad h\to 0. 
\end{equation}

For the lower order term, it follows from \eqref{eq:L2_power_y} with $k=0$ that
\begin{equation}
\label{eq:est_first_and_zero}
h^2 \|e^{is\Theta}\overline{q}b\|_{L^2(\Omega)}=\cO(h^2), \quad h\to 0.
\end{equation}
Therefore, by combining estimates \eqref{eq:est_first_term}--\eqref{eq:est_first_and_zero}, we conclude from  \eqref{eq:conjugated_L1} that 
\begin{equation}
\label{eq:est_op_regu}
\|e^{s(\beta t+x_1)}h^2\mathcal{L}^\ast_{g, q}e^{-s(\beta t+x_1)}v_s\|_{L^2(\Omega)} = \cO(h^{3/2}), \quad h\to 0.
\end{equation}
This completes the verification of \eqref{eq:estimate_v} locally in the set $\Omega$.

To complete the construction of the quasimode $v_s$ on the transversal manifold $M_0$, we glue together the quasimodes defined along small pieces of the geodesic $\gamma$. Since $\hat{M}_0$ is compact and $\gamma(\tau):(-2\varepsilon, L+2\varepsilon)\to \hat{M}_0$ is a unit speed non-tangential geodesic that is not a loop, we get from \cite[Lemma 7.2]{Kenig_Salo} that $\gamma|_{(-2\varepsilon, L+2\varepsilon)}$ self-intersects at times $\tau_j$, where $j \in \{1,\ldots,N\}$, and
\[
-\varepsilon =\tau_0< \tau_1< \dots<\tau_N<\tau_{N+1} =L+\varepsilon.
\]
By \cite[Lemma 7.4]{Kenig_Salo}, there exists an open cover $\{(\Omega_j, \kappa_j)_{j=0}^{N+1}\}$ of $\gamma([-\varepsilon, L+\varepsilon])$ consisting of coordinate neighborhoods that have the following properties:
\begin{enumerate}
\item[(1)] $ \kappa_j(\Omega_j)=I_j\times B$, where $I_j$ are open intervals and $B=B(0, \delta')$ is an open ball in $\R^{n-2}$. Here $\delta'>0$ can be taken arbitrarily small and the same for each $\Omega_j$.
\item[(2)] $ \kappa_j(\gamma(\tau))=(\tau, 0)$ for $r \in I_j$.
\item[(3)] $\tau_j$ only belongs to $I_j$ and $\overline{I_j}\cap \overline{I_k}=\emptyset$ unless $|j-k|\le 1$.
\item[(4)] $ \kappa_j= \kappa_k$ on $\kappa_j^{-1}((I_j\cap I_k)\times B)$.
\end{enumerate}
As explained in \cite[Lemma 7.4]{Kenig_Salo}, the intervals $I_j$ can be chosen as
\[
I_0=(-2\varepsilon, \tau_1-\tilde{\delta}), \quad   I_j=(\tau_j-2\tilde{\delta}, \tau_{j+1}-\tilde{\delta}),\: j=1, \dots, N, \quad I_{N+1}=(\tau_{N+1}-2\tilde{\delta}, L+2\varepsilon)
\]
for some $\tilde{\delta}>0$ small enough. When $\gamma$ does not self-intersect, there is a single coordinate neighborhood of $\gamma|_{[-\varepsilon, L+\varepsilon]}$ such that (1) and (2) are satisfied.

We proceed as follows to construct the quasimode $v_s$. Suppose first that $\gamma$ does not self-intersect at $\tau=0$. By following the arguments from the earlier part of this proof, we find a quasimode
\[
v_s^{(0)}(\tau, y;h)=h^{-\frac{n-2}{4}}e^{is\Theta^{(0)}(\tau, y)}e^{f_1(\tau)}\chi(y/\delta')
\]
in $\Omega_0$ with some fixed initial conditions at $\tau=-\varepsilon$ for the Riccati equation \eqref{eq:Riccati_eq} determining $\Theta^{(0)}$. We next choose some $\tau_0'$ such that $\gamma(\tau_0')\in \Omega_0\cap \Omega_1$ and let
\[
v_s^{(1)}(\tau, y;h)=h^{-\frac{n-2}{4}}e^{is\Theta^{(1)}(\tau, y)}e^{f_1(\tau)}\chi(y/\delta')
\] 
be a quasimode in $\Omega_1$ by choosing the initial conditions for \eqref{eq:Riccati_eq} such that $\Theta^{(1)}(\tau_0')=\Theta^{(0)}(\tau_0')$. 
Here we have used the same function $f_1$ in both $v_s^{(0)}$ and $v_s^{(1)}$ since $f_1$ is globally defined for all $\tau\in (-2\varepsilon,L+2\varepsilon)$ and does not depend on $y$. On the other hand, since the equations determining the phase functions $\Theta^{(0)}$ and $\Theta^{(1)}$ have the same initial data in $\Omega_0$ and in $\Omega_1$, and the local coordinates $ \kappa_0$ and $ \kappa_1$ coincide on $\kappa_0^{-1}((I_0\cap I_1)\times B)$, we get $\Theta^{(1)}=\Theta^{(0)}$ in $I_0\cap I_1$. Therefore, we conclude that $v_s^{(0)}=v_s^{(1)}$ in the overlapped region $\Omega_0\cap \Omega_1$. Continuing in this way, we obtain quasimodes $v_s^{(2)}, \dots, v_s^{(N+1)}$ such that
\begin{equation}
\label{eq:quasi_corres}
v_s^{(j)}=v_s^{(j+1)} \quad \text{in} \quad \Omega_j\cap \Omega_{j+1}
\end{equation}
If $\gamma$ self-intersects at $\tau=0$, we start the construction from $v^{(1)}$ by fixing initial conditions for \eqref{eq:Riccati_eq} at $\tau=\tau_0'\in I_1$ and find $v^{(0)}$ by going backwards.

Let $\chi_j(\tau)$ be a partition of unity subordinate to $\{I_j\}_{j=1}^{N+1}$. We denote $\tilde{\chi_j}(\tau, y)=\chi_j(\tau)$ and define
\[
v_s=\sum_{j=0}^{N+1} \tilde{\chi}_jv_s^{(j)}.
\]
Then we get $v_s\in C^\infty(M_0)$. 

Let $z_1, \dots, z_R\in M_0$ be distinct self-intersection points of $\gamma$, and let $0\le \tau_1<\cdots<\tau_N$, $R\le N$, be the times of self-intersections. Let $V_j$ be a small neighborhood in $\hat{M}_0$ centered at $z_j, j=1, \dots, R$. Following the steps in \cite{Kenig_Salo}, for $\delta'$ sufficiently small we can pick a finite cover $W_1, \dots, W_S$ of remaining points on the geodesic such that $W_k\subset \Omega_{l(k)}$ for some index $l(k)$ and 
\[
\displaystyle \supp v_s \cap M_0 \subset (\cup_{j=1}^RV_j) \cup (\cup_{k=1}^SW_k).
\]
Moreover, the quasimode restricted on $V_j$ and $W_k$ is of the form
\begin{equation}
\label{eq:finite_sum_v}
v_s|_{V_j}=\sum_{l:\gamma(\tau_l)=z_j}v_s^{(l)}
\end{equation}
and 
\begin{equation}
\label{eq:quasimode_Wk}
v_s|_{W_k}=v_s^{l(k)},
\end{equation}
respectively. Since $v_s$ is a finite sum of $v_s^{(l)}$ in each case, the first and second estimate in \eqref{eq:estimate_v} follow from corresponding local considerations \eqref{eq:estimate_v_int} and \eqref{eq:est_op_regu} for each of $v_s^{(l)}$, respectively. 


We next construct a Gaussian beam quasimode for the operator $e^{-s(\beta t+x_1)}\mathcal{L}_{g,q}e^{s(\beta t+x_1)}$ of the form
$
w_s(\tau, y;h)=e^{is\Theta(\tau, y)}B(\tau, y;h)
$
with the same phase function $\Theta \in C^\infty(\Omega, \C)$ that satisfies \eqref{eq:property_of_phi}, and the amplitude $B (\tau, y)\in C^\infty(\Omega)$ is supported near $\Gamma$.

By similar computations as in \eqref{eq:conjugated_L1}, we have
\begin{equation}
\label{eq:conjugated_L2}
\begin{aligned}
e^{-s(\beta t+x_1)}\mathcal{L}_{g, q}e^{s(\beta t+x_1)}w_s
= &e^{is\Theta}[s^2(\langle \nabla_{g_0}\Theta, \nabla_{g_0}\Theta\rangle_{g_0}-(1-\beta^2))B
\\
&+s(-2i\langle \nabla_{g_0}\Theta, \nabla_{g_0}B\rangle_{g_0} -i(\Delta_{g_0}\Theta)B)
\\
&+(-\Delta_{g_0}+q)B].
\end{aligned}
\end{equation}
Notice that the eikonal equation and transport equation for $B$ coincide with the respective corresponding equation for $b$ in \eqref{eq:conjugated_L1}. Therefore, we get $B(\tau, y;h)=b(\tau, y;h)$. Furthermore, since the phase function $\Theta$ is the same for both $v_s$ and $w_s$, we have $w_s=v_s$. Finally, we obtain the third estimate in \eqref{eq:estimate_v} by arguing similarly as in the verification of the second estimate in \eqref{eq:estimate_v}. This completes the proof of Proposition \ref{prop:Gaussian_beam}.
\end{proof}

We want the Gaussian beam quasimodes to concentrate along the geodesic as $h\to 0$. By following the same arguments as in the proof of \cite[Proposition 3.1]{Ferreira_Kur_Las_Salo}, we have the following result.
\begin{prop}
\label{prop:limit_behavior}
Let $s=\frac{1}{h}+i\lambda$, $0<h\ll 1$, $\lambda \in \R$ fixed, and $\beta \in (\frac{1}{\sqrt{3}},1)$. Let  $\gamma\colon[0, L]\to M_0$ be a non-tangential geodesic in $(M_0,g_0)$ as in Proposition \ref{prop:Gaussian_beam}. 
Let $v_s$ be the quasimode from Proposition \ref{prop:Gaussian_beam}. Then for each function $\psi \in C(M_0)$ and $(t',x_1')\in  [0,T]\times \R$ we have
\begin{equation}
\label{eq:limit_prod_vw}
\lim_{h\to 0} \int_{\{t'\}\times \{x_1'\}\times M_0} |v_s|^2\psi dV_{g_0} = \int_{0}^L e^{-2\sqrt{1-\beta^2} \lambda \tau} \psi(\gamma(\tau))d\tau.
\end{equation}
\end{prop}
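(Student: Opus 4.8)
The plan is to reduce the global statement to a local computation in Fermi coordinates and then sum over the finitely many coordinate patches, exactly as the quasimode was constructed. First I would observe that the integrand $|v_s|^2\psi$ is, up to rapidly decaying tails, supported in a tubular neighborhood of the geodesic $\gamma([0,L])$, so by the partition of unity $\{\tilde\chi_j\}$ and the covering by the sets $V_j$ (self-intersection neighborhoods) and $W_k$, it suffices to analyze $\int |v_s^{(l)}|^2\psi\,dV_{g_0}$ in a single Fermi patch $\Omega_l$ with coordinates $(\tau,y)$, and to check that the cross terms between different branches $v_s^{(l)}$, $v_s^{(l')}$ passing through the same self-intersection point are lower order. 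The cross terms vanish in the limit because at a self-intersection the two branches have distinct phase gradients in the $y$-directions, so $|e^{is\Theta^{(l)}}\overline{e^{is\Theta^{(l')}}}|$ is controlled by $e^{-c|y|^2/h}$ relative to a Gaussian that does not localize both branches simultaneously; more simply, each $|v_s^{(l)}|^2$ is an approximate delta in $y$ concentrated on $\{y=0\}$, and at a transversal self-intersection the two tubes meet only at a point, so the overlap integrates to $o(1)$.

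Next I would carry out the model computation in one patch. In Fermi coordinates we have $v_s^{(l)} = h^{-(n-2)/4} e^{is\Theta(\tau,y)} e^{f_1(\tau)}\chi(y/\delta')$, and from $\Theta(\tau,y)=\sqrt{1-\beta^2}\big(\tau+\tfrac12 H(\tau)y\cdot y\big)$ together with $s=\tfrac1h+i\lambda$ one computes
\[
|v_s^{(l)}|^2 = h^{-(n-2)/2}\, e^{-\frac{\sqrt{1-\beta^2}}{h}\,\Im H(\tau)\,y\cdot y}\, e^{-2\lambda\sqrt{1-\beta^2}\,\tau}\, e^{-2\lambda\,\Re f_1}\, e^{2\,\Re f_1}\,\chi(y/\delta')^2 + (\text{terms }\cO(|y|^2)\text{ in the exponent}),
\]
where I am using $\Im\Theta = \tfrac{\sqrt{1-\beta^2}}{2}\,\Im H(\tau)\,y\cdot y + \cO(|y|^3)$ and $\Re\Theta = \sqrt{1-\beta^2}\,\tau + \cO(|y|^2)$. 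The key normalization fact, which is precisely the point of the transport equation \eqref{eq:transport_b0}, is that $|b_0(\tau)|^2 \det\!\big(\Im H(\tau)\big)^{-1/2}$ is constant along $\gamma$: differentiating $\log|b_0|^2 = 2\,\Re f_1$ gives $\p_\tau\log|b_0|^2 = -\Re\tr H$, while the Riccati equation \eqref{eq:Riccati_eq} gives $\p_\tau\log\det\Im H = \tr(\Im H^{-1}\Im\dot H) = -2\,\Re\tr H$ after one uses $\dot H = F - H^2$ with $F$ real symmetric, so the two derivatives match and the ratio is constant. Fixing the initial data so that this constant equals the appropriate multiple of $(2\pi)^{-(n-2)/2}(1-\beta^2)^{(n-2)/4}$, the Gaussian in $y$ integrates (after the change of variables $z=h^{-1/2}y$, using $dV_{g_0} = (1+\cO(|y|))\,d\tau\,dy$ from \eqref{eq:g0_near_geo}) to a constant independent of $\tau$, and dominated convergence gives
\[
\lim_{h\to0}\int |v_s^{(l)}|^2\psi\,dV_{g_0} = \int_{I_l} e^{-2\sqrt{1-\beta^2}\,\lambda\,\tau}\,\psi(\gamma(\tau))\,d\tau.
\]
Summing against the partition of unity $\tilde\chi_j$ (noting $\sum_j\tilde\chi_j=1$ along $\gamma$) reassembles the integral over $[0,L]$ and gives \eqref{eq:limit_prod_vw}.

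The main obstacle — and the only genuinely nontrivial point — is the bookkeeping at self-intersections: one must confirm that the cross terms in $|v_s|^2 = |\sum_l \tilde\chi_l v_s^{(l)}|^2$ do not contribute in the limit, and that the normalizing constant chosen patch-by-patch is globally consistent (the latter is handled by the constancy of $|b_0|^2(\det\Im H)^{-1/2}$ together with the matching conditions $\Theta^{(l)}(\tau_0')=\Theta^{(l+1)}(\tau_0')$ and the common global function $f_1$, exactly as in the construction). For the cross terms, the cleanest argument is that on $\Omega_l\cap\Omega_{l'}$ (with $l\ne l'$ meeting at a self-intersection point $z_j=\gamma(\tau_l)=\gamma(\tau_{l'})$), the two Fermi tubes intersect transversally, so that after localizing one gets a product of two Gaussians centered on transversal $(n-2)$-planes in $\R^{n-2}\times\R^{n-2}$ whose joint integral is $\cO(h^{(n-2)/2})\cdot h^{-(n-2)/2}\cdot h^{(n-2)/2}\to 0$; alternatively one invokes the corresponding estimate from \cite{Ferreira_Kur_Las_Salo}, since this is exactly the situation treated there. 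Everything else is a routine stationary-phase/dominated-convergence argument once the model computation above is in place.
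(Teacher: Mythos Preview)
Your approach is correct and is exactly the argument the paper defers to by citing \cite[Proposition~3.1]{Ferreira_Kur_Las_Salo}: localize in Fermi patches, use the transport equation together with the Riccati equation to see that $|b_0(\tau)|^2(\det\Im H(\tau))^{-1/2}$ is constant so that the transverse Gaussian integral is $\tau$-independent, and check that cross terms at self-intersections are $o(1)$. One cosmetic slip: the factor $e^{-2\lambda\,\Re f_1}$ in your expansion of $|v_s^{(l)}|^2$ should not appear, since the amplitude $b_0=e^{f_1}$ carries no dependence on $s$.
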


\section{Construction of Complex Geometric Optics Solutions}
\label{sec:CGO_solution}



In this section we construct a family of exponentially decaying solutions $u_1\in H^1(Q)$ of the form
\[
u_1(t, x)=e^{-s(\beta t+\varphi(x))}(v_s(x')+r_{1}(t,x)), \quad (t, x)\in Q,
\]
as well as a family of exponentially growing solutions $u_2\in H_{\Box_{c,g}}(Q):=\{u\in L^2(Q): \Box_{c,g}u\in L^2(Q)\}$ given by
\[
u_2(t, x)=e^{s(\beta t+\varphi(x))}(v_s(x')+r_{2}(t,x)), \quad (t, x)\in Q,
\]
satisfying $\supp u_2|_{\Sigma}\subset U$ and $u_2|_{t=0}=0$. Here $s=\frac{1}{h}+i\lambda$ with $\lambda\in \R$ fixed, $\varphi(x)=x_1$ is a limiting Carleman weight, $v_s$ is the Gaussian beam quasimodes given in Proposition \ref{prop:Gaussian_beam}, and $r_{j}$, $j=1,2$, are correction terms that vanish as $h\to 0$. These two types of solutions will play different roles in the proof of Theorem \ref{thm:main_result}, and the proofs for their existence are also somewhat different. The construction of the exponentially decaying solution $u_1$ follows from an interior Carleman estimate \cite[Proposition 3.6]{Liu_Saksala_Yan}. For the exponentially growing solution $u_2$, we shall follow the approach introduced in \cite{Kian_partial_data,Kian_Oksanen}. 

In the following proposition, which shows the existence of exponentially decaying solutions $u_1$, we equip $Q$ with a semiclassical Sobolev norm
\[
\|u\|^2_{H^1_\scl(Q)}=\|u\|^2_{L^2(Q)}+\|h\p_t u\|^2_{L^2(Q)}+\|h\nabla_g u\|^2_{L^2(Q)}.
\]
\begin{prop}
\label{prop:existence_decay_soln}
Let $q \in C(\overline{Q})$, $\beta\in (\frac{1}{\sqrt{3}}, 1)$, and let $s=\frac{1}{h}+i\lambda$ with $\lambda \in \R$ fixed. For all $h>0$ sufficiently small, there exists a solution $u_1\in H^1(Q)$ to the equation $\mathcal{L}_{c,g, q}^*u_1=0$ of the form
\begin{equation}
\label{eq:CGO_v_old}
u_1=e^{-s(\beta t+x_1)} \conf (v_s+r_1),
\end{equation}
where $v_s\in C^\infty(M_0)$ is the Gaussian beam quasimode given in Proposition \ref{prop:Gaussian_beam}, and $r_1\in H^1_{\scl}(Q^\mathrm{int})$ satisfies the estimate
\begin{equation} 
\label{eq:est_r1}
\|r_1\|_{H^1_\scl(Q^\mathrm{int})}= \cO(h^{1/2}), \quad h\to 0. 
\end{equation}
\end{prop}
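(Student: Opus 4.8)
The plan is to construct $r_1$ as the solution of an inhomogeneous equation forced by the quasimode error, using the interior Carleman estimate of \cite[Proposition 3.6]{Liu_Saksala_Yan} to obtain the $H^1_{\scl}$ bound. First I would reduce, via the conformal identity \eqref{eq:equivalence_operator}, to the case $c=1$: writing $u_1 = e^{-s(\beta t + x_1)}\confm(v_s + r_1)$, the equation $\mathcal{L}^*_{c,g,q}u_1 = 0$ is equivalent to $e^{s(\beta t+x_1)}h^2\mathcal{L}^*_{\tilde g,\tilde q}e^{-s(\beta t+x_1)}(v_s + r_1) = 0$, i.e.
\[
e^{s(\beta t+x_1)}h^2\mathcal{L}^*_{\tilde g,\tilde q}e^{-s(\beta t+x_1)}r_1 = -e^{s(\beta t+x_1)}h^2\mathcal{L}^*_{\tilde g,\tilde q}e^{-s(\beta t+x_1)}v_s =: f,
\]
where by the second estimate in \eqref{eq:estimate_v} we have $\|f\|_{L^2(Q)} = \cO(h^{3/2})$ as $h\to 0$. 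So the task becomes: solve $P_{-}r_1 = f$ on $Q^{\mathrm{int}}$ with $\|r_1\|_{H^1_{\scl}(Q^{\mathrm{int}})} = \cO(h^{1/2})$, where $P_{-}$ denotes the conjugated operator $e^{s(\beta t+x_1)}h^2\mathcal{L}^*_{\tilde g,\tilde q}e^{-s(\beta t+x_1)}$ with Carleman weight $-(\beta t + x_1)$.

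Next I would invoke the interior Carleman estimate. The estimate \cite[Proposition 3.6]{Liu_Saksala_Yan}, valid in the range $\beta\in(\tfrac{1}{\sqrt 3},1)$, provides a solvability statement of the following shape: for $h$ small and any $f\in L^2(Q)$ there exists $r\in H^1_{\scl}(Q^{\mathrm{int}})$ with $P_{\mp}r = f$ and
\[
\|r\|_{H^1_{\scl}(Q^{\mathrm{int}})} \le \frac{C}{h}\,\|f\|_{L^2(Q)}.
\]
(This is the standard Hahn--Banach/duality consequence of the a priori Carleman inequality for the adjoint weight, with the gain of one power of $h$ in the $H^1_{\scl}$ norm coming from the semiclassical elliptic regularity built into the estimate.) Applying this with the right-hand side $f$ constructed above and using $\|f\|_{L^2(Q)} = \cO(h^{3/2})$ yields $\|r_1\|_{H^1_{\scl}(Q^{\mathrm{int}})} \le \tfrac{C}{h}\cdot\cO(h^{3/2}) = \cO(h^{1/2})$, which is \eqref{eq:est_r1}. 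Then $u_1 = e^{-s(\beta t+x_1)}\confm(v_s + r_1)$ solves $\mathcal{L}^*_{c,g,q}u_1 = 0$ by construction, and since $v_s\in C^\infty(M_0)$ and $r_1\in H^1_{\scl}(Q^{\mathrm{int}})$, the exponential prefactor being smooth and bounded on $\overline Q$ gives $u_1\in H^1(Q)$.

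The main obstacle is matching the precise functional-analytic form of the solvability statement to what \cite[Proposition 3.6]{Liu_Saksala_Yan} actually asserts — in particular making sure the Carleman estimate is stated for the correct conjugated operator (with the lower-order term $\tilde q\in C(\overline Q)$ absorbed, which is routine since $h^2\tilde q$ is an $\cO(h^2)$ perturbation in the relevant norm), with the correct sign of the weight so that $u_1$ is the \emph{decaying} solution, and with the $H^1_{\scl}$-gain rather than merely an $L^2$-bound. The $H^1_{\scl}$ gain is exactly what is needed downstream for Lemma \ref{lem:rhs_est}, so I would be careful to cite the interior (as opposed to boundary) version of the estimate, which is the one that delivers control of $h\nabla_g r_1$ and $h\p_t r_1$. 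Everything else — the conformal reduction, the bookkeeping of powers of $h$, and the verification that the exponential weight does not spoil $H^1(Q)$ membership — is routine.
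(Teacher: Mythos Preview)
Your proposal is correct and follows essentially the same approach as the paper: conformal reduction via \eqref{eq:equivalence_operator} to $c=1$, then solving the conjugated equation for $r_1$ with right-hand side controlled by \eqref{eq:estimate_v} and invoking the interior Carleman estimate \cite[Proposition 3.6]{Liu_Saksala_Yan} (valid precisely for $\beta\in(\tfrac{1}{\sqrt{3}},1)$) to get the $H^1_{\scl}$ bound. One small slip: the conformal prefactor in \eqref{eq:CGO_v_old} is $\conf$, not $\confm$; otherwise your reduction and bookkeeping of powers of $h$ match the paper's proof exactly.
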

\begin{proof}
Since $(M, g)$ is a CTA manifold, the computations in Section \ref{sec:Gaussian_beam} yield 
\[
c^{\frac{n+2}{4}}\circ \mathcal{L}_{c,g,q} \circ \conf = \mathcal{L}_{\tilde{g}, \tilde{q}},
\]
where $\tilde g=e\oplus g_0$ and $\tilde{q}=c(q-c^{\frac{n-2}{4}}\Delta_g(\conf))$. Hence, we see that if $\tilde{u}$ is a solution to the equation $ \mathcal{L}^\ast_{\tilde{g}, \tilde{q}} \tilde u=0$, then the function $u=\conf \tilde u$ satisfies $\mathcal{L}^\ast_{c,g,q} u=0$. Thus, it suffices to look for solutions to the equation $ \mathcal{L}^\ast_{\tilde{g}, \tilde{q}} \tilde u=0$ of the form
$
\tilde{u}=e^{-s(\beta t+x_1)}(v_s+r_1).
$
This is equivalent to finding a function $r_1$ that solves the equation
\begin{equation}
\label{eq:equation_for_r_1}
e^{s(\beta t+x_1)}h^2\mathcal{L}_{\tilde{g}, \tilde{q}}^\ast e^{-s(\beta t+x_1)}r_1=-e^{s(\beta t+x_1)}h^2\mathcal{L}_{\tilde{g}, \tilde{q}}^\ast e^{-s(\beta t+x_1)}v_s.
\end{equation}

From here we use estimate \eqref{eq:estimate_v} and apply an interior Carleman estimate \cite[Proposition 3.6]{Liu_Saksala_Yan} to deduce that there exists a function $r_1\in H^1_{\scl}(Q^\mathrm{int})$ that solves \eqref{eq:equation_for_r_1}  and satisfies estimate \eqref{eq:est_r1}.
Lastly, we would like to recall that the interior Carleman estimate we utilized in this proof needs the required assumption for the parameter $\beta$. This completes the proof of Proposition \ref{prop:existence_decay_soln}.
\end{proof}


We now turn to the construction of exponentially growing solutions $u_2$ vanishing on part of $\p Q$. We emphasize that the earlier construction of $u_1$ requires an extension of the domain due to the interior Carleman estimate. Therefore, we have no control over the traces of the solutions to the wave equation on $\p Q$ if we consider solutions on the extended domain. Thus, we need to utilize a different approach.

%
For every $\varepsilon>0$ we set
\[
\p M_{\varepsilon,-}=\{x\in \p M: \p_\nu \varphi(x) < -\varepsilon\}, \quad 
\p M_{\varepsilon,+}=\{x\in \p M: \p_\nu \varphi(x) \ge -\varepsilon\},
\]
and $\Sigma_{\varepsilon, \pm }=(0,T)\times \p M_{\varepsilon, \pm}$. 
To find $u_2$, we use the following result, which was originally proved in \cite[Theorem 5.4]{Kian_Oksanen}. For the convenience of the reader, we re-prove this result.
\begin{prop}
\label{prop:existence_exp_grow}
Let $q\in C(\overline{Q})$, $\beta \in [\frac{1}{2}, 1]$, and let $s=\frac{1}{h}+i\lambda$ with $\lambda \in \R$ fixed. For all $h>0$ small enough, the initial boundary value problem 
\begin{equation}
\label{eq:ini_bound_prob_vanish}
\begin{cases}
\mathcal{L}_{c,g,q}u_2 = 0 \quad \text{in}\quad Q,\\
u_2(0,x) = 0 \quad \text{in}\quad M,\\
u_2 = 0 \quad \text{on}\quad \Sigma_{\varepsilon,-},
\end{cases} 
\end{equation} 
admits a solution $u_2\in H_{\Box_{c,g}}(Q)$  of the form
\begin{equation}
\label{eq:exp_grow_soln}
u_2=e^{s(\beta t+x_1)}\conf(v_s+r_2).
\end{equation}
Here $v_s\in C^\infty(M_0)$ is the Gaussian beam quasimode given in Proposition \ref{prop:Gaussian_beam}, and $r_2\in L^2(Q)$ satisfies the estimate
\begin{equation}
\label{eq:est_r2}
\|r_2\|_{L^2(Q)}=\cO(h^{1/2}), \quad h\to 0.
\end{equation}
\end{prop}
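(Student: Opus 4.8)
We reproduce the argument of \cite[Theorem 5.4]{Kian_Oksanen}; here is the plan. \textbf{Reduction and reformulation.} First, exactly as in Proposition \ref{prop:existence_decay_soln}, the conformal identity \eqref{eq:equivalence_operator} lets us take $c=1$: it suffices to produce $\tilde u_2$ with $\mathcal{L}_{\tilde g,\tilde q}\tilde u_2=0$ in $Q$, $\tilde u_2|_{t=0}=0$, $\tilde u_2|_{\Sigma_{\varepsilon,-}}=0$, of the form $\tilde u_2=e^{s\phi}(v_s+r_2)$ with $\phi(t,x)=\beta t+x_1$ and $\|r_2\|_{L^2(Q)}=\cO(h^{1/2})$; then $u_2=\conf\tilde u_2$ works, since $\conf$ is smooth and strictly positive and hence preserves both membership in $H_{\Box_{c,g}}(Q)$ and the vanishing of the (very weak) traces. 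Substituting the ansatz and conjugating, $r_2$ must solve
\[
P_\phi r_2:=e^{-s\phi}h^2\mathcal{L}_{\tilde g,\tilde q}e^{s\phi}r_2=-e^{-s\phi}h^2\mathcal{L}_{\tilde g,\tilde q}e^{s\phi}v_s=:f\quad\text{in }Q,
\]
where $\|f\|_{L^2(Q)}=\cO(h^{3/2})$ by the third estimate of Proposition \ref{prop:Gaussian_beam}, subject to the (inhomogeneous, and in general nonzero since $\gamma$ meets $\p M_0$) boundary data forcing $\tilde u_2$ to vanish on $\{t=0\}$ and on $\Sigma_{\varepsilon,-}$. Note the problem is underdetermined: nothing is imposed at $t=T$ or on $\Sigma_{\varepsilon,+}$, so we only need to exhibit \emph{one} such solution.

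\textbf{Boundary Carleman estimate.} The engine is a boundary Carleman estimate for the wave operator with the linear weight $\phi=\beta t+x_1$, valid for $\beta\in[\tfrac12,1]$ (see \cite[Theorem 4.1]{Kian_Oksanen}), schematically of the form: for all $w$ in a suitable class of functions vanishing, together with the relevant derivatives, on the complementary faces $\Sigma_{\varepsilon,+}$ and $\{t=T\}$,
\[
\frac1C\,\frac1h\|w\|_{L^2(Q)}^2+\big(\text{favorable boundary terms on }\Sigma_{\varepsilon,-}\cup\{t=0\}\big)\;\le\;\big\|e^{\phi/h}h^2\mathcal{L}_{\tilde g,\tilde q}^{\ast}e^{-\phi/h}w\big\|_{L^2(Q)}^2 .
\]
The boundary term on $\Sigma_{\varepsilon,-}$ enters with the good sign precisely because $\p_\nu\varphi<-\varepsilon<0$ there — this is exactly why the vanishing in \eqref{eq:ini_bound_prob_vanish} is imposed on the shrunken face $\Sigma_{\varepsilon,-}$ rather than on $\Sigma_-$ — and the term on $\{t=0\}$ is controlled because $\p_t\phi=\beta>0$. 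The bound $\beta\ge\tfrac12$ is what makes this linear weight admissible; for Theorem \ref{thm:main_result} one ultimately fixes $\beta\in(\tfrac1{\sqrt3},1)\subset[\tfrac12,1]$ so that this proposition and Proposition \ref{prop:existence_decay_soln} hold simultaneously.

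\textbf{Duality.} Given the estimate, run the now-standard Hahn--Banach/Riesz construction of complex geometric optics solutions with prescribed boundary behavior (as in \cite{Kian_partial_data,Kian_Oksanen}): on the range of $w\mapsto e^{\phi/h}h^2\mathcal{L}^{\ast}_{\tilde g,\tilde q}e^{-\phi/h}w$ in $L^2(Q)$, define the linear functional obtained by pairing against $f$ and integrating the boundary data against the appropriate traces of $w$ on $\Sigma_{\varepsilon,-}$ and $\{t=0\}$; the Carleman estimate bounds this functional, so Hahn--Banach and the Riesz representation theorem furnish $r_2\in L^2(Q)$ solving $P_\phi r_2=f$ with the required boundary conditions in the transposition sense (note that $r_2$, being only in $L^2(Q)$, may be small in the bulk while still carrying the prescribed nonzero traces). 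Bookkeeping the $h$-powers — the $\cO(h^{3/2})$ of $f$ against the factor $h^{-1}$ in the estimate, together with the boundary contributions of $e^{s\phi}v_s$ on $\Sigma_{\varepsilon,-}$ and $\{t=0\}$ — yields $\|r_2\|_{L^2(Q)}=\cO(h^{1/2})$, i.e.\ \eqref{eq:est_r2}. Undoing the reductions, $u_2=e^{s\phi}\conf(v_s+r_2)$ has the form \eqref{eq:exp_grow_soln}, solves \eqref{eq:ini_bound_prob_vanish}, and lies in $H_{\Box_{c,g}}(Q)$ since $\Box_{c,g}u_2=-qu_2\in L^2(Q)$ (as $q$ is bounded).

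\textbf{Main obstacle.} The crux is the boundary Carleman estimate together with the rigorous version of the duality step. One must prove the estimate with the correct signs on $\Sigma_{\varepsilon,-}$ and $\{t=0\}$ for the weight $\phi=\beta t+x_1$, which is not pseudoconvex for the wave operator everywhere and only becomes usable in this boundary-term form once $\beta\ge\tfrac12$; and one must set up the duality in the correct function spaces — since $L^2(Q)$ solutions of the wave equation have traces only in negative-order Sobolev spaces, the conditions $u_2|_{t=0}=0$ and $u_2|_{\Sigma_{\varepsilon,-}}=0$ have to be read in the transposition sense, the adjoint test space must carry vanishing conditions only on the free faces $\Sigma_{\varepsilon,+}$, $\{t=T\}$, and one must verify that the nonzero boundary values of the quasimode are genuinely absorbed by the favorable boundary terms. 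Everything else — the conformal reduction and the $h$-power bookkeeping — is routine.
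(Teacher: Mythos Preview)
Your proposal is correct and follows essentially the same route as the paper --- conformal reduction to $c=1$, the boundary Carleman estimate of \cite[Lemma 4.2]{Kian_Oksanen} (after the substitution $t\mapsto T-t$, $x_1\mapsto -x_1$), and the Hahn--Banach/Riesz duality of \cite[Lemma 5.1]{Kian_partial_data} to produce $r_2$ with the prescribed traces interpreted via \cite[Proposition A.1 and Theorem A.1]{Kian_partial_data}. The only minor imprecision is your description of the test space: in the paper the test functions $u\in\mathcal{D}$ have Dirichlet trace vanishing on \emph{all} of $\Sigma$ (and $u|_{t=0}=u|_{t=T}=\p_t u|_{t=T}=0$), the favorable boundary terms in the Carleman estimate involve $\p_\nu u|_{\Sigma_-}$ and $\p_t u(0,\cdot)$, and a cutoff $\psi$ supported in $\p M_{\varepsilon/2,-}$ is used so that the weighted norm $\||\p_\nu\varphi|^{-1/2}\tilde r_-\|_{L^2(\Sigma_-)}$ stays bounded.
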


Since $U' \subset \p M$ is an open neighborhood of $\pM_+=\{x\in \p M:  \p_\nu \varphi(x) \geq  0\}$, we can choose $\varepsilon>0$ sufficiently small such that $\Sigma_{\varepsilon,+}\subset U=(0,T)\times U'$. Thus, the claims $\supp u_2|_{\Sigma}\subset U$ and $u_2|_{t=0}=0$ follow from Proposition \ref{prop:existence_exp_grow}.

\begin{proof}[Proof of Propostion \ref{prop:existence_exp_grow}]

Arguing similarly as in the proof of Proposition \ref{prop:existence_decay_soln}, we may assume that the conformal factor $c = 1$ and look for solutions to the equation $ \mathcal{L}_{\tilde{g}, \tilde{q}} \tilde u=0$ of the form
$
\tilde{u}=e^{s(\beta t+x_1)}(v_s+r_2)
$
such that $\tilde{u} (0,x) = 0$ in $M$ and $\tilde u = 0$ on $\Sigma_{\varepsilon,-}$. Here $\tilde{g}$ and $\tilde{q}$ are given by \eqref{eq:equiv_coeff}.
This is equivalent to finding a function $r_2$ that satisfies
\begin{equation}
\label{eq:equation_for_r_2}
\begin{cases}
e^{-s(\beta t+x_1)}h^2\mathcal{L}_{\tilde{g}, \tilde{q}} e^{s(\beta t+x_1)}r_2=-e^{-s(\beta t+x_1)}h^2\mathcal{L}_{\tilde{g}, \tilde{q}} e^{s(\beta t+x_1)}v_s = : f , \quad \text{in}\quad Q,\\
r_2(0,x) = -v_s(x') = : r_0 \quad \text{in}\quad M,\\
r_2(t,x) = -v_s(x') = :r_- \quad \text{on}\quad \Sigma_{\varepsilon,-}.
\end{cases} 
\end{equation}
This can be achieved by solving the following initial boundary value problem
\begin{subequations}
\begin{empheq}[left=\empheqlbrace]{align}
&e^{-\frac{1}{h}(\beta t+x_1)}\mathcal{L}_{\tilde{g}, \tilde{q}} e^{\frac{1}{h} (\beta t +x_1)}\tilde r=-e^{i\lambda(\beta t +x_1)} e^{-s(\beta t+x_1)}\mathcal{L}_{\tilde{g}, \tilde{q}} e^{s(\beta t+x_1)}v_s = : \tilde f \quad \text{in}\quad Q, \label{eq:eq_tilde_r}
\\
&\tilde r(0,x) = -e^{i\lambda x_1} v_s(x') = :\tilde r_0 \quad \text{in}\quad M, 
\label{eq:initial_cond_tilde_r}
\\
&\tilde r(t,x) = - e^{i\lambda(\beta t + x_1)}v_s(x') \psi(x) = :\tilde r_- \quad \text{on}\quad \Sigma_{-}, \label{eq:lateral_cond_tilde_r}
\end{empheq}
\end{subequations}
and setting $r_2:=e^{-i\lambda(\beta t +x_1)}\tilde{r}$.
Here $\psi\in C^\infty_0(M)$ is a cut-off function such that $0\le \psi\le 1$, $\supp \psi \cap \p M \subset \p M_{\varepsilon/2,-}$, and $\psi = 1$ on $\p M_{\varepsilon,-}$. 

In order to verify the existence of $\tilde r$, we need to derive a boundary Carleman estimate for the operator $\mathcal{L}_{\tilde{g},\tilde{q}}$. To that end, let us introduce the space 
\[
\mathcal{D} := \left\{v\in C^\infty(\overline{Q}): v|_{\Sigma} = v|_{t = T} = \p_t v |_{t = T} = v |_{t = 0} = 0\right\}. 
\]
By replacing $t$ by $T-t$ and $x_1$ by $-x_1$ in the boundary Carleman estimate \cite[Lemma 4.2]{Kian_Oksanen}, we see that following estimate for the wave operator $\Box_{\tilde{g}}$ 
\begin{equation}
\label{eq:bdy_Car_est_1}
\begin{aligned}
&h^{1/2}\|\p_t u(0,\cdot)\|_{L^2(M)} + h^{1/2} \| \sqrt{\LV \p_\nu \varphi \RV}\p_\nu u  \|_{L^2(\Sigma_-)}+\|u\|_{L^2(Q)} 
\\
& \le \mathcal{O}(h) \| e^{-\frac{1}{h}(\beta t +x_1)}\Box_{\tilde{g}} e^{\frac{1}{h}(\beta t +x_1)} u \|_{L^2(Q)} + \mathcal{O}(h^{1/2})\|\sqrt{\p_\nu\varphi}\p_\nu u\|_{L^2(\Sigma_+)}
\end{aligned}
\end{equation}
is valid for any $u\in \mathcal{D}$.

To establish a boundary Carleman estimate for the operator $\mathcal{L}_{\tilde{g}, \tilde q}$, we first apply the triangle inequality to obtain
\[
\| e^{-\frac{1}{h}(\beta t +x_1)}\Box_{\tilde{g}} e^{\frac{1}{h}(\beta t +x_1)} u \|_{L^2(Q)} \le \| e^{-\frac{1}{h}(\beta t +x_1)}\mathcal{L}_{\tilde{g},\tilde{q}} e^{\frac{1}{h}(\beta t +x_1)} u \|_{L^2(Q)} + \|\tilde{q}u\|_{L^2(Q)}.
\]
In particular, we have
\[
\|\tilde{q}u\|_{L^2(Q)}\le \|\tilde{q}\|_{L^\infty(Q)}\|u\|_{L^2(Q)}.
\]
Therefore, by absorbing the term $h\|\tilde{q}\|_{L^\infty(Q)}\|u\|_{L^2(Q)}$ into the left-hand side of \eqref{eq:bdy_Car_est_1}, we obtain the following boundary Carleman estimate for the conjugated operator $\mathcal{L}_{\tilde g, \tilde q}$ 
\begin{equation}
\label{eq:bdy_Car_est_2}
\begin{aligned}
&h^{1/2}\|\p_t u(0,\cdot)\|_{L^2(M)} + h^{1/2} \| \sqrt{\LV \p_\nu \varphi \RV}\p_\nu u  \|_{L^2(\Sigma_-)}+\|u\|_{L^2(Q)} 
\\
& \le \mathcal{O}(h) \| e^{-\frac{1}{h}(\beta t +x_1)}\mathcal{L}_{\tilde{g},\tilde{q}} e^{\frac{1}{h}(\beta t +x_1)} u \|_{L^2(Q)} + \mathcal{O}(h^{1/2})\|\sqrt{\p_\nu\varphi}\p_\nu u\|_{L^2(\Sigma_+)},
\end{aligned}
\end{equation}
which holds for any function  $u\in \mathcal{D}$.

Let us recall the following estimates, which follow immediately from Proposition \ref{prop:Gaussian_beam}:
\begin{equation}\label{eq:tilde_f_r}
\|\tilde f\|_{L^2(Q)} = \mathcal{O}(h^{-1/2}), \quad \|\tilde r_0\|_{L^2(M)} = \mathcal{O}(1).
\end{equation}
Also, by utilizing the same arguments as in the proof of \cite[Lemma 5.1]{Liu_Saksala_Yan}, we obtain the estimate $\|v_s\|_{L^2(\Sigma_{\varepsilon/2,-})} = \mathcal{O}(1)$. Furthermore, since $\p_\nu \varphi<-\frac{\varepsilon}{2}$ in the set $\Sigma_{\varepsilon/2, -}$, the estimate $|\p_\nu \varphi|^{-1/2} =\cO(\varepsilon^{-1/2})$ holds in $\Sigma_{\varepsilon/2, -}$. Thus,
\begin{equation}
\label{eq:tilde_r_boudary}
\||\p_\nu \varphi|^{-1/2}\tilde r_-\|_{L^2{(\Sigma_-)}}
\le \||\p_\nu \varphi|^{-1/2} e^{i\lambda(\beta t + x_1)}v_s\|_{L^2{(\Sigma_{\varepsilon/2,-})}}
= \mathcal{O}(\varepsilon^{-1/2}).
\end{equation}

We next follow the proof of \cite[Lemma 5.1]{Kian_partial_data} closely  to verify the existence of $\tilde r\in H_{\Box_{\tilde{g}}}(Q)$ satisfying \eqref{eq:eq_tilde_r}--\eqref{eq:lateral_cond_tilde_r}. 
To that end, we introduce the space
\[
\mathcal{M} := \left\{
(e^{\frac{1}{h}(\beta t +x_1)}\mathcal{L}^\ast_{\tilde g, \tilde{q}} e^{-\frac{1}{h}(\beta t +x_1)}u, \p_\nu u|_{\Sigma_+}): u\in \mathcal{D}
\right\}
\]
and equip it with the norm
\[
\|(g_1,g_2)\|_{\mathcal{M}} = \|g_1\|_{L^2(Q)} 
+\|h^{-1/2}|\p_\nu \varphi|^{1/2}g_2\|_{L^2(\Sigma_+)}.
\]
Then $\mathcal{M}$ can be viewed as a subspace of $L^2(Q)\times L_{\varphi,h,+}^2(\Sigma_+)$, where
\[
L_{\varphi,h,+}^2(\Sigma_+) : = \{u: \|h^{-1/2}|\p_\nu\varphi|^{1/2}  u \|_{L^2(\Sigma_+)}= \mathcal{O}(1)\}.
\]
We shall construct a bounded linear functional on $\mathcal{M}$ and use a standard Hahn-Banach argument to prove the existence of $\widetilde{r}$, and the conditions \eqref{eq:initial_cond_tilde_r} and \eqref{eq:lateral_cond_tilde_r} will follow from integration by parts and density argument.

If $u\in \mathcal{D}$, by the boundary Carleman estimate \eqref{eq:bdy_Car_est_2} and the Cauchy-Schwartz inequality, we have
\begin{align*}
&\left|\LA u,\tilde{f}\RA_{L^2(Q)} - \LA \p_tu(0,\cdot), \tilde r_0 \RA_{L^2(M)} -\LA \p_\nu u,\tilde r_-\RA_{L^2(\Sigma_-)}\right|
\\
&\le \|u\|_{L^2(Q)}\|\tilde{f}\|_{L^2(Q)} + \|\p_tu(0,\cdot)\|_{L^2(M)}\|\tilde r_0\|_{L^2(M)} + \||\p_\nu \varphi|^{1/2}\p_\nu u\|_{L^2(\Sigma_-)}\||\p_\nu \varphi|^{-1/2}\tilde r_-\|_{L^2{(\Sigma_-)}}
\\
&\le \mathcal{O}(1) \|(e^{\frac{1}{h}(\beta t +x_1)}\mathcal{L}^*_{\tilde g, \tilde q} e^{-\frac{1}{h}(\beta t +x_1)}u, \p_\nu u|_{\Sigma_+})\|_{\mathcal{M}}
\\
& \qquad \quad \times \LP h\|\tilde{f}\|_{L^2(Q)} + h^{1/2}\|\tilde r_0\|_{L^2(M)} + h^{1/2}\||\p_\nu \varphi|^{-1/2}\tilde r_-\|_{L^2{(\Sigma_-)}}\RP.
\end{align*}
Hence, we may define a linear functional $\mathcal{S}$ on $\mathcal{M}$ by setting 
\[
\mathcal{S}(e^{\frac{1}{h}(\beta t +x_1)}\mathcal{L}^\ast_{\tilde g, \tilde{q}} e^{-\frac{1}{h}(\beta t +x_1)}u, \p_\nu u|_{\Sigma_+}) : = \LA u,\tilde{f}\RA_{L^2(Q)} - \LA \p_tu(0,\cdot), \tilde r_0 \RA_{L^2(M)} -\LA \p_\nu u,\tilde r_-\RA_{L^2(\Sigma_-)}, \: u\in \mathcal{D}.
\]

By the Hahn-Banach theorem, we can extend the operator $\mathcal{S}$ to a continuous linear form $\tilde{\mathcal{S}}$ on $L^2(Q)\times L^2_{\varphi,h}(\Sigma_+)$ without increasing the norm. Hence, it follows from estimates \eqref{eq:tilde_f_r} and \eqref{eq:tilde_r_boudary} that
\begin{equation}
\|\tilde{\mathcal{S}}\| = \|\mathcal{S}\| \le \mathcal{O}(1)\LP h\|\tilde{f}\|_{L^2(Q)} + h^{1/2}\|\tilde r_0\|_{L^2(M)} + h^{1/2}\||\p_\nu \varphi|^{-1/2}\tilde r_-\|_{L^2{(\Sigma_-)}}\RP = \mathcal{O}(h^{1/2}).
\end{equation}
Thus, by the Riesz representation theorem, there exists $(\tilde r,\tilde r_+)\in L^2(Q)\times L_{\varphi,h,-}^2(\Sigma_+)$
such that 
\[
\|\tilde r\|_{L^2(Q)} + \|h^{1/2}|\p_\nu\varphi|^{-1/2}  \tilde r_+\|_{L^2(\Sigma_+)} = \|\cS\| =\mathcal{O}(h^{1/2}),
\]
and
\[
\tilde{\mathcal{S}}(g_1,g_2) = \LA g_1,\tilde r \RA_{L^2(Q)} + \LA  g_2 ,\tilde r_+\RA_{L^2(\Sigma_+)},\quad (g_1,g_2)\in L^2(Q)\times L_{\varphi,h,+}^2(\Sigma_+).
\]
Here the space $L_{\varphi,h,-}^2(\Sigma_+)$ is given by
\[
L_{\varphi,h,-}^2(\Sigma_+) : = \{u: \|h^{1/2}|\p_\nu\varphi|^{-1/2}  u \|_{L^2(\Sigma_+)}= \mathcal{O}(1) \}.
\]
Therefore, for all functions $u\in \mathcal{D}$, we get 
\begin{equation}\label{eq:equality_u_inD}
\begin{aligned}
&\LA e^{\frac{1}{h}(\beta t +x_1)}\mathcal{L}^\ast_{\tilde g, \tilde{q}} e^{-\frac{1}{h}(\beta t +x_1)}u, \tilde r\RA_{L^2(Q)} + \LA  \p_\nu u|_{\Sigma_+} , \tilde r_+\RA_{L^2(\Sigma_+)} 
\\
&=\LA u,\tilde{f}\RA_{L^2(Q)} - \LA \p_tu(0,\cdot), \tilde r_0 \RA_{L^2(M)} -\LA \p_\nu u|_{\Sigma_-},\tilde r_-\RA_{L^2(\Sigma_-)}.
\end{aligned}
\end{equation}

We now verify that $\tilde r$ satisfies equations \eqref{eq:eq_tilde_r}--\eqref{eq:lateral_cond_tilde_r}. By taking $u\in C^\infty_0(Q)$ in \eqref{eq:equality_u_inD} and using the fact that $C^\infty_0(Q)$ is dense in $L^2(Q)$, we see that equation \eqref{eq:eq_tilde_r} holds. Furthermore, \eqref{eq:eq_tilde_r} implies $\tilde r \in H_{\Box_{\tilde g}}(Q)$. Thus, by \cite[Proposition A.1]{Kian_partial_data}, we can define the trace $\tilde r|_{\Sigma}\in H^{-3}(0,T;H^{-1/2}(\p M))$ and $\tilde r|_{t = 0}\in H^{-2}(M)$.
Furthermore, the density result \cite[Theorem A.1]{Kian_partial_data}, in conjunction with integration by parts, implies that for all $u\in \mathcal{D}$, we have 
\[
\begin{aligned}
&\LA e^{\frac{1}{h}(\beta t +x_1)}\mathcal{L}^\ast_{\tilde g, \tilde{q}} e^{-\frac{1}{h}(\beta t +x_1)}u, \tilde r\RA_{L^2(Q)} 
+\LA  \p_\nu u|_{\Sigma_+} , \tilde r|_{\Sigma_+}\RA_{H^{3}(0,T;H^{1/2}(\Sigma_+)),H^{-3}(0,T;H^{-1/2}(\Sigma_+))} 
\\
&=\LA u,\tilde{f}\RA_{L^2(Q)} 
- \LA \p_tu(0,\cdot), \tilde r|_{t = 0} \RA_{H^{2}(M), H^{-2}(M)} 
-\LA \p_\nu u,\tilde r|_{\Sigma_-}\RA_{H^{3}(0,T;H^{1/2}(\Sigma_-)),H^{-3}(0,T;H^{-1/2}(\Sigma_-))}.
\end{aligned}
\]

Finally, we compare the equality above with \eqref{eq:equality_u_inD} and take arbitrary $u\in \mathcal{D}$ to conclude that $\tilde r = \tilde r_0$ and $\tilde r|_{\Sigma_-} = \tilde r_-$. Therefore, we have verified \eqref{eq:initial_cond_tilde_r} and \eqref{eq:lateral_cond_tilde_r}. This completes the proof of Proposition \ref{prop:existence_exp_grow}.
\end{proof}

\section{Proof of Theorem \ref{thm:main_result}}
\label{sec:proof}

Let $u_1\in H^1(Q)$ be an exponentially decaying CGO solution of the form \eqref{eq:CGO_v_old} satisfying the equation $\mathcal{L}_{c,g,q_1}^* u_1 = 0$ in $Q$, and let $u_2\in H_{\Box_{c,g}}(Q)$ be an exponentially growing CGO solution of the equation $\mathcal{L}_{c,g,q_2} u_2 = 0$ in $Q$ given by \eqref{eq:exp_grow_soln}  such that $u_2|_{t=0}=0$ and $\supp u_2|_\Sigma \subset U$. Due to the assumption $\mathcal{C}_{g,q_1} = \mathcal{C}_{g,q_2}$, by \cite[Proposition 3.1]{Kian_Oksanen}, there exists a function $v\in H_{\Box_{c,g}}(Q)$ that satisfies the equations $\mathcal{L}_{c,g,q_1} v = 0$ and 
\[
(u_2 - v)|_U =  (u_2 - v)|_{t=0}=(u_2 - v)|_{t=T}= \p_t(u_2 - v)|_{t=0}=\p_\nu (u_2 - v)|_V = 0.
\]
Then the function $u : = u_2 - v \in H_{\Box_{c,g}}(Q)$ is a solution to the equation
\begin{equation}
\label{eq:ibvp_difference}
\mathcal{L}_{c,g, q_1}u=qu_2 \quad \text{in} \quad Q, 
\quad 
u|_\Sigma= u|_{t=0}=u|_{t=T}= \p_tu|_{t=0}=\p_\nu u|_V = 0.
\end{equation}
Here and in what follows we used the notation $q:=q_1-q_2$.

After arguing similarly as in \cite[Section 6]{Kian_Oksanen}, we obtain the following integral identity
\begin{equation}
\label{eq:int_id_q}
\int_Q qu_2\overline{u_1}dV_gdt= \int_M c^{-1}\p_t u(T, x)\overline{u_1(T, x)}dV_g-  \int_{\Sigma\setminus V}\p_\nu u\overline{u_1}dS_g dt.
\end{equation}
We next substitute the CGO solutions \eqref{eq:CGO_v_old} and \eqref{eq:exp_grow_soln} into \eqref{eq:int_id_q} and pass to the limit $h\to 0$. 

To analyze the limit of the terms on right-hand side of \eqref{eq:int_id_q}, we have the following lemma, which states that both terms on the right-hand side of \eqref{eq:int_id_q} vanish as $h\to 0$. Its proof is same as the proof of \cite[Lemma 5.1]{Liu_Saksala_Yan}, which mainly relies on an application of a boundary Carleman estimate \cite[Theorem 4.1]{Kian_Oksanen}, as well as estimates \eqref{eq:estimate_v} and \eqref{eq:est_r1}. This lemma is the reason why we need the $H^1$-norm decay \eqref{eq:est_r1} for $r_1$.
\begin{lem}
\label{lem:rhs_est}
Let $u_1$ and $u$ be the functions described above. Then the following estimates hold as $h\to 0$:
\begin{equation}
\label{eq:est_int_id_rhs_1}
\int_M c^{-1}\p_t u(T, x)\overline{u_1(T, x)}dV_g = \cO(h^{1/2})
\end{equation}
and
\begin{equation}
\label{eq:est_int_id_rhs_2}
\int_{\Sigma\setminus V}\p_\nu u\overline{u_1}dS_g dt =\cO(h^{1/2}).
\end{equation}
\end{lem}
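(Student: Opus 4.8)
The plan is to derive both \eqref{eq:est_int_id_rhs_1} and \eqref{eq:est_int_id_rhs_2} from one application of the boundary Carleman estimate \cite[Theorem 4.1]{Kian_Oksanen} to the function $u=u_2-v$, which by \eqref{eq:ibvp_difference} solves $\mathcal{L}_{c,g,q_1}u=qu_2$ in $Q$ with the homogeneous data $u|_\Sigma=u|_{t=0}=\p_t u|_{t=0}=u|_{t=T}=0$ and $\p_\nu u|_V=0$. Set $w:=e^{-\frac1h(\beta t+x_1)}u$. Since $u$ vanishes on $\Sigma$ and at $t=0,T$, one has $w|_\Sigma=w|_{t=0}=\p_t w|_{t=0}=w|_{t=T}=0$, and moreover $\p_t w(T,\cdot)=e^{-\frac1h(\beta T+x_1)}\p_t u(T,\cdot)$, $\p_\nu w|_\Sigma=e^{-\frac1h(\beta t+x_1)}\p_\nu u|_\Sigma$, while the conjugated operator satisfies $e^{-\frac1h(\beta t+x_1)}\mathcal{L}_{c,g,q_1}e^{\frac1h(\beta t+x_1)}w=e^{-\frac1h(\beta t+x_1)}\mathcal{L}_{c,g,q_1}u=e^{-\frac1h(\beta t+x_1)}qu_2$. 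Applying the boundary Carleman estimate for $\mathcal{L}_{c,g,q_1}$ (the zeroth order term $q_1$ is absorbed exactly as in the passage from \eqref{eq:bdy_Car_est_1} to \eqref{eq:bdy_Car_est_2}; the estimate holds in particular for $\beta\in(\frac{1}{\sqrt 3},1)$) and using $\p_\nu u=0$ on $V\supset\Sigma_-$, so that the $\Sigma_-$-term on the right-hand side of the estimate drops out, we obtain
\begin{equation*}
h^{1/2}\big\|e^{-\frac1h(\beta T+x_1)}\p_t u(T,\cdot)\big\|_{L^2(M)}+h^{1/2}\big\|\sqrt{|\p_\nu\varphi|}\,e^{-\frac1h(\beta t+x_1)}\p_\nu u\big\|_{L^2(\Sigma_+)}\le\cO(h)\big\|e^{-\frac1h(\beta t+x_1)}qu_2\big\|_{L^2(Q)}.
\end{equation*}

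To bound the right-hand side, substitute the exponentially growing solution \eqref{eq:exp_grow_soln}: with $s=\frac1h+i\lambda$ one has $e^{-\frac1h(\beta t+x_1)}u_2=e^{i\lambda(\beta t+x_1)}\conf(v_s+r_2)$, hence $\|e^{-\frac1h(\beta t+x_1)}qu_2\|_{L^2(Q)}\le\|q\|_{L^\infty(\overline Q)}\|\conf\|_{L^\infty(M)}(\|v_s\|_{L^2(Q)}+\|r_2\|_{L^2(Q)})$. Since $v_s$ is independent of $t$, $\|v_s\|_{L^2(Q)}=\sqrt T\,\|v_s\|_{L^2(M)}=\cO(1)$ by \eqref{eq:estimate_v}, and $\|r_2\|_{L^2(Q)}=\cO(h^{1/2})$ by \eqref{eq:est_r2}; thus the right-hand side is $\cO(h)$, and the displayed estimate yields
\begin{equation*}
\big\|e^{-\frac1h(\beta T+x_1)}\p_t u(T,\cdot)\big\|_{L^2(M)}=\cO(h^{1/2}),\qquad \big\|\sqrt{|\p_\nu\varphi|}\,e^{-\frac1h(\beta t+x_1)}\p_\nu u\big\|_{L^2(\Sigma_+)}=\cO(h^{1/2}).
\end{equation*}

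To finish, note $\overline{u_1}=e^{-\overline s(\beta t+x_1)}\overline{\conf(v_s+r_1)}$, so $|\overline{u_1}|=e^{-\frac1h(\beta t+x_1)}|\conf|\,|v_s+r_1|$. For \eqref{eq:est_int_id_rhs_1}, absorbing the factor $e^{-\frac1h(\beta T+x_1)}$ into $\p_t u(T,\cdot)$ and using Cauchy--Schwarz on $M$,
\begin{equation*}
\Big|\int_M c^{-1}\p_t u(T,x)\overline{u_1(T,x)}\,dV_g\Big|\le\cO(1)\,\big\|e^{-\frac1h(\beta T+x_1)}\p_t u(T,\cdot)\big\|_{L^2(M)}\big(\|v_s\|_{L^2(M)}+\|r_1|_{t=T}\|_{L^2(M)}\big).
\end{equation*}
Here $\|v_s\|_{L^2(M)}=\cO(1)$, while the semiclassical trace inequality $\|f|_{t=T}\|_{L^2(M)}^2\le Ch^{-1}\|f\|_{H^1_\scl(Q)}^2$ together with \eqref{eq:est_r1} gives $\|r_1|_{t=T}\|_{L^2(M)}=\cO(1)$, so the integral is $\cO(h^{1/2})$. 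For \eqref{eq:est_int_id_rhs_2}, the condition $\p_\nu u|_V=0$ confines the integral to $\Sigma\setminus V$, which is contained in $\Sigma_+$ and on which $\p_\nu\varphi\ge\delta_0>0$ for some $\delta_0$ (by compactness of $\p M\setminus V'$ together with $\p M_-\subset V'$); hence $e^{-\frac1h(\beta t+x_1)}|\p_\nu u|\le\delta_0^{-1/2}\sqrt{|\p_\nu\varphi|}\,e^{-\frac1h(\beta t+x_1)}|\p_\nu u|$ on $\Sigma\setminus V$, and Cauchy--Schwarz yields
\begin{equation*}
\Big|\int_{\Sigma\setminus V}\p_\nu u\,\overline{u_1}\,dS_g dt\Big|\le\cO(1)\,\big\|\sqrt{|\p_\nu\varphi|}\,e^{-\frac1h(\beta t+x_1)}\p_\nu u\big\|_{L^2(\Sigma_+)}\big(\|v_s\|_{L^2(\Sigma)}+\|r_1|_\Sigma\|_{L^2(\Sigma)}\big)=\cO(h^{1/2}),
\end{equation*}
using $\|v_s\|_{L^2(\Sigma)}=\cO(1)$ (by the argument of \cite[Lemma 5.1]{Liu_Saksala_Yan}, cf.\ \eqref{eq:tilde_r_boudary}) and $\|r_1|_\Sigma\|_{L^2(\Sigma)}\le Ch^{-1/2}\|r_1\|_{H^1_\scl(Q)}=\cO(1)$, again by the semiclassical trace inequality and \eqref{eq:est_r1}.

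The main obstacle is the first step: one must invoke \cite[Theorem 4.1]{Kian_Oksanen} in exactly the right form — with the Carleman weight oriented so that the traces $\p_t u(T,\cdot)$ and $\p_\nu u|_{\Sigma_+}$ are the controlled quantities while $\p_\nu u|_{\Sigma_-}$ appears (harmlessly, since it vanishes) on the right-hand side — and then justify applying it to the merely $H_{\Box_{c,g}}(Q)$-regular solution $u=u_2-v$, which requires the density and trace machinery of \cite[Theorem A.1]{Kian_partial_data} already used in the proof of Proposition \ref{prop:existence_exp_grow}. The remaining estimates are routine, but one should note that the $h^{-1/2}$ loss in the trace bounds for $r_1$ is exactly offset by the $\cO(h^{1/2})$ gain from the Carleman estimate, so the full semiclassical $H^1$-decay \eqref{eq:est_r1}, rather than mere $L^2$-decay, is indispensable here; this is the structural reason for restricting $\beta$ to the range in which the interior Carleman estimate \cite[Proposition 3.6]{Liu_Saksala_Yan} is available.
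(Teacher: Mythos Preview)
Your proposal is correct and follows essentially the same approach as the paper, which defers the argument to \cite[Lemma 5.1]{Liu_Saksala_Yan}: apply the boundary Carleman estimate \cite[Theorem 4.1]{Kian_Oksanen} (in the orientation controlling $\p_t u(T,\cdot)$ and $\p_\nu u|_{\Sigma_+}$, with the harmless $\Sigma_-$-term on the right) to $u=u_2-v$, bound the source via \eqref{eq:estimate_v} and \eqref{eq:est_r2}, and then pair with $\overline{u_1}$ using Cauchy--Schwarz together with the semiclassical trace bounds for $r_1$ afforded by \eqref{eq:est_r1}. Your closing remarks on the regularity issue and on why the full $H^1_{\scl}$-decay of $r_1$ is needed are exactly the points the paper highlights.
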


To investigate the left-hand side of the integral identity \eqref{eq:int_id_q}, we compute from the respective CGO solution \eqref{eq:CGO_v_old} and \eqref{eq:exp_grow_soln} for $u_1$ and $u_2$ that
\[
u_2\overline{u_1}=e^{2i\lambda (\beta t+x_1)}\confm(|v_s|^2+\overline{v_s}r_2+\overline{r_1}v_s+\overline{r_1}r_2).
\]
By 
estimates \eqref{eq:estimate_v}, \eqref{eq:est_r1}, \eqref{eq:est_r2}, 
and the Cauchy-Schwartz inequality, we obtain the estimate
\begin{equation}
\label{eq:quasimode_zero}
\bigg|\int_Q e^{2i\lambda (\beta t+x_1)} \confm (\overline{v_s}r_2 + \overline{r_1}v_s+\overline{r_1}r_2)dV_gdt\bigg|=\cO(h^{1/2}), \quad h\to 0.
\end{equation}

On the other hand, since $q_1, q_2 \in C(\overline{Q})$ and $q_1=q_2$ on the boundary $\p Q$, we may continuously extend $q$ by zero on $(\R^2\times M_0)\setminus Q$ and denote the extension by the same letter. Then from the equality $dV_g = c^{\frac{n}{2}}dV_{g_0}dx_1$, Fubini's theorem, the dominated convergence theorem, and the concentration property \eqref{eq:limit_prod_vw}, we obtain the following limit as $h\to 0$:
\begin{equation}
\label{eq:limit_prod_quasimodes}
\begin{aligned}
\int_Q e^{2i\lambda (\beta t+x_1)} \confm q|v_s|^2dV_gdt
&= \int_\R \int_\R \int_{M_0} e^{2i\lambda (\beta t+x_1)} c q|v_s|^2 dV_{g_0}dx_1dt
\\
&\to \int_{0}^{L} \int_\R \int_\R  e^{2i\lambda (\beta t+x_1)-2\sqrt{1-\beta^2} \lambda \tau} (cq)(t,x_1, \gamma(\tau))
dx_1dt d\tau.
\end{aligned}  
\end{equation}
Hence, by replacing $2\lambda$ with $\lambda$, we deduce from  \eqref{eq:int_id_q}, \eqref{eq:quasimode_zero}, \eqref{eq:limit_prod_quasimodes}, and Lemma \ref{lem:rhs_est} that the identity 
\begin{equation}
\label{eq:cgo_sub}
\int_{0}^{L} \int_\R \int_\R  e^{i\lambda (\beta t+x_1)-\sqrt{1-\beta^2} \lambda \tau} (cq)(t,x_1, \gamma(\tau)) dx_1dtd\tau =0
\end{equation}
holds for every non-tangential geodesic $\gamma$ in the transversal manifold $(M_0,g_0)$.

We are ready to utilize Assumption \ref{asu:inj}, the invertibility of the attenuated geodesic ray transform on $(M_0,g_0)$. To that end, we denote $\mathcal{F}_{(t, x_1)\to (\xi_1, \xi_2)}$ the Fourier transform in the two Euclidean variables $(t,x_1)$ and define
\begin{align*}
f(x', \beta, \lambda) &:= \int_\R\int_\R e^{i\lambda (\beta t+x_1)} (cq)(t, x_1, x')dx_1dt
=\mathcal{F}_{(t, x_1)\to (\xi_1, \xi_2)}(cq)|_{(\xi_1, \xi_2)=-\lambda(\beta, 1)},
\end{align*}
where $ x'\in M_0$, $\beta\in (\frac{1}{\sqrt{3}}, 1)$,  and $\lambda \in \R$. Since $q\in C(\overline{Q})$, the function $f(\cdot, \beta, \lambda)$ is continuous on $M_0$. As $\gamma$ is an arbitrary non-tangential geodesic, it follows from \eqref{eq:cgo_sub} that the following attenuated geodesic ray transform 
\begin{equation}
\label{eq:Fou_geo_trans}
I^{-\sqrt{1-\beta^2}\lambda}(f(\cdot,\beta, \lambda))(x, \xi)
=\int_0^{\tau_{\mathrm{exit}}(x, \xi)} e^{-\sqrt{1-\beta^2}\lambda \tau} f(\gamma_{x, \xi}(\tau), \beta, \lambda)d\tau, 
\quad (x, \xi) \in \p_-SM_0 \setminus \Gamma_-, 
\end{equation}
vanishes. 

By Assumption \ref{asu:inj}, there exists $\varepsilon>0$ such that $f(\gamma(\tau), \beta, \lambda)=0$ when $\sqrt{1-\beta^2}|\lambda|<\varepsilon$. Hence, there exist constants $\beta_0\in (\frac{1}{\sqrt 3}, 1)$, $\lambda_0>0$, and $\delta>0$ such that for every $(\lambda, \beta)\in \R^2$ satisfying $|\beta-\beta_0|$, $|\lambda-\lambda_0|<\delta$, and $\lambda\ne 0$, we have $\sqrt{1-\beta^2}|\lambda|<\varepsilon$. Thus, we see that $\mathcal{F}_{(t, x_1)\to (\xi_1, \xi_2)}(cq)=0$ in an open set of $\R^2$. Furthermore, since $q$ is compactly supported, we get from the Paley-Wiener theorem that $\cF(cq)$ is real analytic. Therefore, we conclude that $cq=0$ in $Q$. Finally, since $c$ is a positive function, we have $q=q_1-q_2=0$. This completes the proof of Theorem \ref{thm:main_result}.

\bibliographystyle{abbrv}
\bibliography{bibliography_potential}
\end{document}